\documentclass[final]{siamart}
\usepackage{lipsum}
\usepackage{amsfonts}
\usepackage{graphicx}
\usepackage{algorithmic}
\usepackage{amsmath}
\usepackage{amssymb}
\usepackage{subfig}
\usepackage{booktabs}
\usepackage{multirow}
\usepackage{amsopn}

\graphicspath{{draw/}}
\usepackage{subfig}
\usepackage{color}

\DeclareUnicodeCharacter{00A0}{ }

\newcommand{\bx}{\mathbf{x}}

\newcommand{\md}{\mathrm{d}}

\title{Parallel energy stable phase field simulations of Ni-based alloys system
\thanks This work was supported in part by NSFC (grant \# 11871069 and 12131002), and the Strategic Priority Research Program of the Chinese Academy of Sciences (grant \# XDB22020100).}

\author{Jizu Huang\thanks{LSEC, Academy of Mathematics and Systems Science, Chinese Academy of Sciences, Beijing 100190, China, 
    and School of Mathematical Sciences, University of Chinese Academy of Sciences, Beijing 100049, China  ({\tt huangjz@lsec.cc.ac.cn}).}
  \and Chao Yang\thanks{School of Mathematical Sciences, Peking University, Beijing 100871, China,
 and Institute for Computing and Digital Economy, Peking University, Changsha 410205, China. ({\tt chao\_yang@pku.edu.cn}). }
 Corresponding author: Chao Yang ({\tt chao\_yang@pku.edu.cn}).}

\begin{document}

\maketitle
\begin{abstract}
{In this paper, we investigate numerical methods for solving Nickel-based phase field system related to free energy, including the elastic energy and logarithmic type functionals.  To address the challenge posed by the particular free energy functional, we propose a semi-implicit scheme based on the discrete variational derivative method, which is unconditionally energy stable and maintains the energy dissipation law and the mass conservation law.  Due to the good stability of the semi-implicit scheme, the adaptive time step strategy is adopted, which can flexibly control the time step according to the dynamic evolution of the problem. A domain decomposition based, parallel Newton--Krylov--Schwarz method is introduced to solve the nonlinear algebraic system constructed by the discretization at each time step. Numerical experiments show that the proposed algorithm is energy stable with large time steps, and highly scalable to six thousand processor cores.
}
\end{abstract}

\textbf{Key words.}\, Ni-based alloys phase field system, Allen--Cahn/Ginzburg--Landau equations, discrete variational derivative
method, unconditionally energy stability, Newton--Krylov--Schwarz

{\bf\indent AMS Subject Classifications:} \,\,74S20, 65Y05

\section{Introduction} 
	Nickel-based (Ni-based) alloys are often used as high-temperature structural materials, such as working blades of aero-engine and turbine disk \cite{bozzolo2012evolution, smith2017diffusion} in the aerospace fields, due to their excellent toughness, high temperature creeps strength, oxidation resistance, corrosion resistance, and anti-fatigue properties.
	The precipitation strengthening of ordered precipitates ($\gamma'$ phase) plays an important role in maintaining the high-temperature mechanical properties of Ni-based alloys, which are embedded in the matrix ($\gamma$ phase) through the heat treatment process.  
	And the properties of Ni-based alloys are closely related to the volume fraction of the ordered precipitates \cite{yadav2016effect}.
	It is therefore important to study the effect of volume fraction on the precipitation kinetics of the $\gamma'$ phase, including the particle size, number density, and spatial distribution \cite{su2018microstructural}.

	The phase field model has been widely used to study microstructure evolutions in Ni-based alloys \cite{liu2018morphology, lu2014phase, wang1998field, wu2016precipitation, zhu2002linking, zhu2004three}, such as phase transitions and dynamic descriptions of the $\gamma'$ phase growth and coarsening.
	In these phase field models, the $\gamma/\gamma'$ microstructure can be described by a composition field variable and several long-range order parameters.
	The total free energy of a Ni-based alloy system, including the local chemical free energy, the elastic energy, and the interfacial energy, is defined as a functional of all field variables. 
	The local chemical free energy is usually constructed by a Landau-type polynomial \cite{wang1998field} or using the CALPHAD method \cite{zhu2002linking}. 
	The elastic energy makes an important contribution to the morphology evolution and the kinetics of growth and coarsening of the $\gamma'$ phase, which is defined by using the micro-elasticity theory proposed in \cite{khachaturyan1995elastic, khachaturyan2013theory}.
	Driven by the total free energy, a gradient flow system for the Ni-based alloys, such as Cahn--Hilliard (CH) and Ginzburg--Landau (GL) coupled equations, can be derived.

	Over the past two decades, a series of efforts \cite{liu2018morphology, lu2014phase, wang1998field, wu2016precipitation, yang2017phase, zhang2020numerical, zhu2002linking, zhu2004three} have been made to numerically simulate the morphological evolution of the Ni-based alloys. 
	Most of these works rely on the semi-implicit Fourier spectral method \cite{chen1998applications, zhu1999coarsening} to solve the CH and GL coupled equations together with the linear elasticity equations in order to maintain stability and achieve high accurate solutions.
	It is well known that there are a plethora of efficient solution algorithms for gradient flow problems, including the convex splitting method \cite{baskaran2013convergence, elliott1993global, eyre1998unconditionally, shen2012second}, the stabilization method \cite{zhu1999coarsening, shen2010numerical}, the exponential time differencing \cite{ju2015fast}, the invariant energy quadratization \cite{yang2016linear, zhao2017numerical}, the scalar auxiliary variable \cite{shen2019new}, the discrete variational derivative (DVD) method \cite{du1991numerical, huang2020parallel, DVDM, PFCDVD}, etc.
	However, due to the existence of the linear elasticity equations and the logarithmic functions in the free energy, it is a non-trivial task to extend the above methods to handle the Ni-based alloys phase field model. 
	On the other hand, the interfacial thickness between the $\gamma$ and $\gamma'$ phase is less than 5 nm \cite{ansara1997thermodynamic} in a typical Ni-based alloy system.
	To maintain numerical accuracy and stability, the mesh size should be smaller than the interfacial thickness, which makes the simulation of Ni-based alloy systems very expensive. 
	As a result, there are only a few works on three dimensional simulations, usually at the scale of tens of nanometers. 
	Such a small computational system is usually far from adequate to study the coarsening process in real three dimensional Ni-based alloys, where a sufficient number of particles is required for proper statistical measurements \cite{zhu2004three}.

	To overcome the above difficulties, we propose an unconditional energy stable semi-implicit scheme based on the DVD method to efficiently solve the Ni-based alloys phase field system, which preserves many properties of the original phase-field system, for instance, the free energy dissipation, the mass conservation, etc. 
	Thanks to the unconditional energy stability of the semi-implicit scheme, an adaptive time stepping strategy \cite{huang2020parallel} can be introduced to dynamically select the time step size according to the accuracy requirement of the solutions and the dynamic features of the system, which can improve the efficiency of the newly proposed scheme. 
	In the semi-implicit method, the most expensive step is to solve a large sparse nonlinear algebraic system at each time step. 
	A parallel, highly scalable, Newton– Krylov–Schwarz (NKS) algorithm \cite{NKS} is applied to solve such a system, where the nonlinear system arisen at each time step is solved by an inexact Newton method. And the Jacobian problem of the NKS algorithm is solved with a preconditioned Krylov subspace method, within which an LU or ILU factorization is used for solving the subdomain problem. 
	To achieve the optimal performance of the NKS algorithm, several key parameters, including the type of the Schwarz preconditioner, the overlap size, and the subdomain solver, are discussed and tested.

	The remainder of this paper is organized as follows. 
	In Sec. 2, we introduce a phase field model for Ni-based alloys and study the mass conservation law and energy dissipation law. 
	In Sec. 3, an unconditional energy stable semi-implicit scheme is proposed based on the DVD method, which is proved to enjoy the mass conservation law and energy dissipation law. 
	In Sec. 4, we introduce the NKS algorithm together with the adaptive time step strategy to solve the nonlinear system. Several three dimensional simulations are reported in Sec. 5 
	and the paper is concluded in Sec. 6.

\section{Phase field model for Ni-based alloys}
	In this section, we introduce a phase field model for the Ni-based alloys to describe the evolution of the $\gamma/\gamma'$ phase driven by total free energy, which includes the Gibbs energy, the interface energy, and the elastic energy.
	The total free energy for the Ni-based alloys has the following form:
\begin{equation}
\label{simple free energy-1}
{\cal E}=\int\limits_\Omega\left[{E}_{\textnormal{G}}(c,\eta)+\frac{ \gamma_c}2| \nabla c|^2+\frac{3\gamma_{\eta}}{2}
|\nabla \eta|^2+E_{\textnormal{el}}(c)\right]\hbox{d} \mathbf x,
\end{equation} 
	where $\Omega\in\mathbb{R}^d$ with $d=1,\,2,$ or $3$ is the computational domain, ${E}_{\textnormal{G}}(c,\eta)$ is the Gibbs energy, and $E_{\textnormal{el}}(c)$ is the elastic energy.
	In \eqref{simple free energy-1}, the conserved concentration field $c:=c(\mathbf x,t)\in(0,0.25)$ denotes the composition of Aluminum (Al) element,
	$\eta:=\eta(\mathbf x,t)\in(0,1)$ is a long-range order parameter field, and $\gamma_c, \,\gamma_\eta$ are 
 	the interface energy coefficients of the composition and long-range order
	parameter, respectively. The Gibbs energy ${E}_{\textnormal{G}}(c,\eta)$ is defined as:
\begin{equation}
{E}_{\textnormal{G}}(c,\eta)=\Phi (c,\eta) + \theta \left\{\Psi (c)+\frac 3 4 \Psi (c\eta)+ \frac 1 4 \Psi (c\eta')\right\},
\end{equation} 
where $\Phi (c,\eta) $ is a polynomial functional of $c$ and $\eta$ given in Appendix A, $\Psi(z)=z\ln z+(1-z)\ln(1-z)$, $\theta $ is a dimensionless parameter, and $\eta'=4-3\eta$.
The elastic strain energy density $E_{\textnormal{el}}(c)$ is given by:
\begin{equation}
\label{elasticenergy}
E_{\textnormal{el}}(c)=\frac 12 \epsilon^{\textnormal{el}}: C:\epsilon^{\textnormal{el}},
\end{equation}
where $C$ is the Hooke's tensor and $\epsilon^{\textnormal{el}}=(\epsilon^{\textnormal{el}}_{IJ})$ is the elastic strain.
The elastic strain can be reformulated as $\epsilon^{\textnormal{el}}=\epsilon-\epsilon^0$,
where $\epsilon$ is the total strain related to the displacement and the eigenstrain $\epsilon^0$ expressed as
$\epsilon^0=\epsilon_0(c-\bar{c}) {\cal I}$.
Here $\epsilon_0$ is a lattice parameter,
${\cal I}$ is the identity matrix, and $\bar{c}=\frac{1}{|\Omega|}\int_\Omega c(\bx,t)\md \bx$ is the average composition.
According to the relationship between the strain and the displacement, the total strain $\epsilon$ can be
expressed as:
$$
\epsilon_{IJ}(\mathbf{x},t)=\frac 1 2\left[\frac{\partial u_I}{\partial x_J}+\frac{\partial u_J}{\partial x_ I}\right],
$$
where $\mathbf{u}(\mathbf{x},t):=(u_1,\cdots,u_d)^T$ denotes the displacement.
Since the mechanical equilibrium for 
the elastic displacement is established much faster than
composition diffusional process, 
the system is always in the mechanical equilibrium
\begin{equation}\label{equilibrium:equ}
\nabla \cdot \sigma =\nabla \cdot \sigma^0,
\end{equation}
where $\sigma=C:\epsilon$ and $\sigma^0=C:\epsilon^0$.

	In the Ni-based alloys, the morphological evolution of the composition field variable $c(\mathbf{x},t)$ and the long-range order parameter $\eta(\mathbf{x},t)$ is described by the CH--GL coupled equations in the following dimensionless form:
	\begin{equation}\label{CHGL}
	\left\{
	\begin{aligned}
	&\frac{\partial c}{\partial t}=\nabla \left[M\nabla \frac {\delta {\cal E}}{\delta c}\right] \quad \hbox{in}~\Omega\times(0,\mathcal {T}],\\
	& \frac{\partial \eta}{\partial t}=-\frac{\delta {\cal E}}{\delta \eta}\quad \hbox{in}~\Omega\times(0,\mathcal {T}],
	\end{aligned}
	\right.
	\end{equation}
	where $M= \varkappa c(1-c) $ with $ \varkappa$ being a mobility parameter.
	The variational derivatives of the free energy functional $ \frac {\delta {\cal E}}{\delta c}$ and $ \frac {\delta {\cal E}}{\delta \eta}$ in \eqref{CHGL} are defined as: 
	\begin{equation}\label{variational derivatives}
	\left\{
	\begin{aligned}
	&\frac {\delta {\cal E}}{\delta c}=\frac{\partial E_{\textnormal G}(c,\eta)}{\partial c}-\gamma_c\Delta c+\frac{\partial E_{\textnormal{el}}(c)}{\partial c},\\
	& \frac {\delta {\cal E}}{\delta \eta}=\frac{\partial E_{\textnormal G}(c,\eta)}{\partial \eta}-3\gamma_\eta\Delta \eta.
	\end{aligned}
	\right.
	\end{equation}
	More details about the Ni-based alloys phase field system are discussed in Appendix A.
	The initial conditions for CH--GL coupled equations \cref{CHGL} are given as $c^0=c(\bx,0)$ and $\eta^0=\eta(\bx,0)$.
	We consider periodic boundary conditions or the following homogeneous Neumann boundary conditions
\begin{equation}
\label{boundaryequ}
\mathbf{n}\cdot \nabla c|_{\partial \Omega}=\mathbf{n}\cdot \nabla \eta|_{\partial \Omega}
=\mathbf{n}\cdot M\nabla \frac{\delta {\cal E}}{\delta c}\Big|_{\partial \Omega}=
 \mathbf{n}\cdot {\sigma^{\textnormal{el}}}|_{\partial \Omega}=0,
\end{equation}
where $\sigma^{\textnormal{el}}=C:\epsilon^{\textnormal{el}}$. 

Several important properties of the phase field model for Ni-based alloys are described in the following theorem.

\begin{theorem}
The Ni-based alloys phase field model given by \cref{CHGL} and \cref{equilibrium:equ} together with periodic boundary conditions or homogeneous Neumann boundary conditions \cref{boundaryequ} satisfies the following properties.
\begin{itemize}
\item[1.] Mass conservation law: $\int_\Omega c(\bx,t)\md \bx=\int_\Omega c(\bx,0)\md \bx$.
\item[2.] Energy dissipation law: $\frac{\md}{\md t}{\cal E} \leq 0$.
\item[3.] The mechanical equilibrium equation: $\frac{\delta{\cal E}}{\delta\mathbf{u}}=0$.
\end{itemize}
\end{theorem}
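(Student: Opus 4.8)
The plan is to dispatch the three claims in turn using only the divergence theorem, one integration by parts, and the boundary or periodicity conditions \eqref{boundaryequ}, with the third claim supplying the bookkeeping that keeps the second one clean. For the mass conservation law, integrate the first equation of \eqref{CHGL} over $\Omega$ and use the divergence theorem,
\[
\frac{\md}{\md t}\int_\Omega c\,\md\bx=\int_\Omega\nabla\cdot\!\Bigl(M\nabla\tfrac{\delta{\cal E}}{\delta c}\Bigr)\md\bx=\int_{\partial\Omega}\bn\cdot M\nabla\tfrac{\delta{\cal E}}{\delta c}\,\md S=0 ,
\]
the last equality being the third condition in \eqref{boundaryequ} (or, for periodic data, cancellation over opposite faces). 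Integrating in time yields $\int_\Omega c(\bx,t)\,\md\bx=\int_\Omega c(\bx,0)\,\md\bx$.

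For the mechanical equilibrium equation, observe that among the energy densities only $E_{\textnormal{el}}$ depends on $\mathbf{u}$, and only through $\epsilon$. Its first variation is $\delta\!\int_\Omega E_{\textnormal{el}}\,\md\bx=\int_\Omega\sigma^{\textnormal{el}}\!:\!\delta\epsilon\,\md\bx$ with $\delta\epsilon_{IJ}=\tfrac12(\partial_J\delta u_I+\partial_I\delta u_J)$, so by the symmetry of $\sigma^{\textnormal{el}}$ and integration by parts
\[
\int_\Omega\sigma^{\textnormal{el}}\!:\!\delta\epsilon\,\md\bx=\int_{\partial\Omega}(\bn\cdot\sigma^{\textnormal{el}})\cdot\delta\mathbf{u}\,\md S-\int_\Omega(\nabla\cdot\sigma^{\textnormal{el}})\cdot\delta\mathbf{u}\,\md\bx ;
\]
the boundary term vanishes by the last condition in \eqref{boundaryequ} (or periodicity), whence $\frac{\delta{\cal E}}{\delta\mathbf{u}}=-\nabla\cdot\sigma^{\textnormal{el}}$. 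Since $\sigma^{\textnormal{el}}=C\!:\!(\epsilon-\epsilon^0)=\sigma-\sigma^0$, the equilibrium relation \eqref{equilibrium:equ} is exactly $\nabla\cdot\sigma^{\textnormal{el}}=0$, i.e.\ $\frac{\delta{\cal E}}{\delta\mathbf{u}}=0$.

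For the energy dissipation law, differentiate ${\cal E}$ in time by the chain rule,
\[
\frac{\md}{\md t}{\cal E}=\int_\Omega\Bigl(\frac{\delta{\cal E}}{\delta c}\,\frac{\partial c}{\partial t}+\frac{\delta{\cal E}}{\delta\eta}\,\frac{\partial\eta}{\partial t}\Bigr)\md\bx+\int_\Omega\frac{\delta{\cal E}}{\delta\mathbf{u}}\cdot\frac{\partial\mathbf{u}}{\partial t}\,\md\bx ,
\]
with $\frac{\delta{\cal E}}{\delta c}$ and $\frac{\delta{\cal E}}{\delta\eta}$ as in \eqref{variational derivatives}. The last integral vanishes by the mechanical equilibrium just proved. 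Substituting the dynamics \eqref{CHGL}, integrating the $c$-term by parts once and discarding the resulting boundary integral via \eqref{boundaryequ}, one gets
\[
\frac{\md}{\md t}{\cal E}=-\int_\Omega M\Bigl|\nabla\tfrac{\delta{\cal E}}{\delta c}\Bigr|^2\md\bx-\int_\Omega\Bigl|\tfrac{\delta{\cal E}}{\delta\eta}\Bigr|^2\md\bx\le0 ,
\]
because $M=\varkappa c(1-c)>0$ for $c\in(0,0.25)$ and $\varkappa>0$.

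The main obstacle is the bookkeeping for the elastic energy, which depends on $c$ both explicitly through the eigenstrain $\epsilon^0=\epsilon_0(c-\bar c){\cal I}$ (nonlocal in $c$ via $\bar c$) and implicitly through $\mathbf{u}$. The clean way around it is precisely the third claim: since $\mathbf{u}$ is always in mechanical equilibrium, $\frac{\delta{\cal E}}{\delta\mathbf{u}}=0$, so the implicit $\mathbf{u}$-contribution to $\frac{\md}{\md t}{\cal E}$ drops out and the reduced variational derivative $\frac{\delta{\cal E}}{\delta c}$ of \eqref{variational derivatives} may be used as is. For that expression to be correct one should also verify that the nonlocal $\bar c$-term contributes nothing to $\frac{\delta{\cal E}}{\delta c}$, which follows from $\int_\Omega\sigma^{\textnormal{el}}\,\md\bx=0$ — itself a consequence of $\nabla\cdot\sigma^{\textnormal{el}}=0$ together with the traction-free boundary condition. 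Once these identifications are made, the remaining manipulations are routine integrations by parts.
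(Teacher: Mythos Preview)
Your proof is correct and follows essentially the same approach as the paper: the divergence theorem for mass conservation, the identification $\frac{\delta{\cal E}}{\delta\mathbf{u}}=-\nabla\cdot\sigma^{\textnormal{el}}$ together with \eqref{equilibrium:equ} for the mechanical equilibrium, and the chain rule plus integration by parts (with boundary terms discarded via \eqref{boundaryequ}) for energy dissipation. Your closing remark about the nonlocal $\bar c$-dependence of $E_{\textnormal{el}}$, resolved through $\int_\Omega\sigma^{\textnormal{el}}\,\md\bx=0$, is an extra point of care that the paper's own proof does not address.
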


\begin{proof}
 Integrating the first equation of \cref{CHGL} over domain $\Omega$, we obtain the conservation law $\int_\Omega c(\bx,t)\md \bx=\int_\Omega c(\bx,0)\md \bx$ immediately. 
 According to \cref{elasticenergy} and \cref{equilibrium:equ}, we have 
 \begin{equation}
 \frac{\delta{\cal E}}{\delta\mathbf{u}}=-\nabla\cdot \frac{\partial E_{\textnormal{el}}}{\partial(\nabla\mathbf{u})}=
 -\nabla\cdot \frac{\partial E_{\textnormal{el}}}{\partial \epsilon}=-\nabla \cdot \sigma^{\textnormal{el}}= 0.
 \end{equation}

The free energy functional of the Ni-based alloys phase field system satisfies the following equation  
\begin{equation}
\label{DIS1}
\begin{aligned}
 \frac{\md}{\md t}{\cal E} &= \frac{\md}{\md t}\int\limits_\Omega\left\{E_{\textnormal{G}}(c,\eta)+\frac{ \gamma_c}2| \nabla c|^2+\frac{3\gamma_{\eta}}{2}
|\nabla \eta|^2+E_{\textnormal{el}}(c)\right\}\hbox{d} \mathbf x \\
 &= \int\limits_\Omega \Big\{\frac{\partial E_{\textnormal{G}}}{\partial c}\frac{\partial c}{\partial t}+\gamma_c\nabla c\cdot\frac{\partial \nabla c}{\partial t}+\frac{\partial E_{\textnormal{el}}}{\partial c}\frac{\partial c}{\partial t}
 +\frac{\partial E_{\textnormal{el}}}{\partial(\nabla\mathbf{u})}\cdot \frac{\partial (\nabla \mathbf{u})}{\partial t} \\
 &\qquad +\frac{\partial E_{\textnormal{G}}}{\partial \eta}\frac{\partial \eta}{\partial t}+3\gamma_\eta\nabla \eta\cdot \frac{\partial \nabla \eta}{\partial t}\Big\}\md\bx\\
 &= \int\limits_\Omega\left\{ \frac {\delta {\cal E}}{\delta c}\frac{\partial c}{\partial t}+\frac {\delta {\cal E}}{\delta \eta} \frac{\partial \eta}{\partial t}\right\}\md\bx + B_1
 \\&= -\int\limits_\Omega\left\{ M\nabla \frac {\delta {\cal E}}{\delta c}\cdot\nabla\frac {\delta {\cal E}}{\delta c}+\left(\frac{\delta {\cal E}}{\delta \eta}\right)^2\right\}\md\bx + B_2,
\end{aligned}
\end{equation}
where $B_1$ and $B_2$ are the boundary terms due to integration-by-parts, which vanishes with the given boundary conditions. 
Thus we have
\begin{equation}
\frac{\md}{\md t}{\cal E} \leq 0,
\end{equation}
which demonstrates the energy dissipation of the Ni-based alloys phase field system. 
\end{proof}

By denoting ${\cal E}_\delta:={\cal E}(c+\delta c,\eta+\delta\eta,\mathbf{u}+\delta\mathbf{u})$, 
the following integral relationship between the variational derivative and the free energy holds
\begin{equation}
\label{condG}
\begin{aligned}
{\cal E}_\delta- {\cal E}(c,\eta,\mathbf{u})&= 
 \int\limits_\Omega \Big\{\frac{\partial E_{\textnormal{G}}}{\partial c}\delta c+\gamma_c\nabla c\cdot\nabla \delta c+\frac{\partial E_{\textnormal{el}}}{\partial c}\delta c
 +\frac{\partial E_{\textnormal{el}}}{\partial (\nabla \mathbf{u})}\cdot\nabla\delta \mathbf{u}\\
 &\qquad +\frac{\partial E_{\textnormal{G}}}{\partial \eta}\delta \eta+3\gamma_\eta\nabla \eta \cdot\nabla \delta \eta\Big\}\md\bx\\
 & \approx \int\limits_\Omega\left\{ \frac {\delta {\cal E}}{\delta c}\delta c+\frac {\delta {\cal E}}{\delta \eta}\delta \eta\right\}\md\bx + B.
\end{aligned}
\end{equation}
The first equality is obtained by using Taylor expansion, and the second equality comes from 
the integration-by-parts formula and $\frac{\delta{\cal E}}{\delta \mathbf{u}}=0$. Here $B$ represents the boundary terms from the integration-by-parts formula
and equals zero due to the given boundary conditions. Equation~\eqref{condG} shows the connection between the variational derivative 
and the total free energy functional, which plays an important role in the construction of the energy-stable numerical scheme for the Ni-based alloys phase field system.

\section{Discretization for the Ni-based alloys phase field model}
We rewrite the Ni-based alloys phase field model given by \cref{CHGL} and \cref{equilibrium:equ}  in a vector form
\begin{equation}
\label{system-1}\left\{
\begin{aligned}
&\frac{\partial \mathbf{V}}{\partial t} = -{\cal A}\mathbf{G},\\
&\nabla \cdot \sigma =\nabla \cdot \sigma^0,
\end{aligned}
\right.
\end{equation}
where
$\mathbf{V}=(c,\,\eta)^T$,  ${\cal A}=\textnormal{diag}(-\nabla \cdot M \nabla,1)$ with the mobility $M=\varkappa c(1-c)$,
and $\mathbf{G}:=\frac{\delta {\cal E}}{\delta \mathbf{V}}$ represents the variational derivative of the free 
energy functional ${\cal E}$ with respect to $\mathbf{V}$. 
The local free energy is decomposed into four parts, which are  
\begin{equation}
\label{neq:01}
\begin{aligned}
& E_1 (\mathbf{V})= \Phi(c,\eta),\\
& E_2(\mathbf{V})= \theta \left\{\Psi (c)+\frac 3 4 \Psi (c\eta)+ \frac 1 4 \Psi (c\eta')\right\},\\
&E_3(\mathbf{V},\nabla\mathbf{u})= E_{\textnormal{el}}(c),\\
& E_4 (\nabla \mathbf{V}^T)=\frac{ \gamma_c}2| \nabla c|^2+\frac{3\gamma_{\eta}}{2}
|\nabla \eta|^2.
\end{aligned}
\end{equation}
It is easy to check that ${\cal A}$ is a semi-positive operator for $c\in(0, \ 0.25)$ and $\eta\in [0,\ 1]$.

Without loss of generality, we consider discretizing the Ni-based alloys phase field system on a one dimensional domain $\Omega=[0,L_x]$, which is covered by a uniform mesh with the mesh size $\Delta x = L_x/N_x$.
The temporal interval $[0, {\cal T}]$ is split by a set of nonuniform points $\{t_n\}_0^{N_{\cal T}}$ with the $n$-th time step size $\Delta t_n=t_{n+1}-t_n$. 
Let us denote  $\tilde{\mathbf{V}}^n_{i}\approx \mathbf{V}(x_i, t_n)$ and $ \tilde{{u}}^n_{i}\approx {u}(x_i, t_n)$ as the approximate solutions of the Ni-based alloys phase field  system
at $t_n$, where
$x_i= (i-\frac{1}{2})\Delta x$ 
with $1\leq i \leq N_x$. 
Throughout the paper, notations with a tilde are the corresponding approximate solutions or functions at the discrete level.
We denote  $[\tilde\phi]_{i}^{n+\frac 12}:=\frac 1 2 ([\tilde\phi]_{i}^n+[\tilde\phi]_{i}^{n+1})$ as the approximation of any function $\phi$ at $t_{n+\frac 12}$.
 Let us introduce some useful notations as following:
$$[D_x^+\tilde\phi]^n_{i}=\frac{\tilde\phi^n_{i+1}-\tilde\phi^n_{i}}{\Delta x},\quad \ [D_x^-\tilde\phi]^n_{i}=\frac{\tilde\phi^n_{i}-\tilde\phi^n_{i-1}}{\Delta x},\quad \
[D_x \tilde\phi]^n_{i}=\frac{\tilde\phi^n_{i+\frac{1}{2}}-\tilde\phi^n_{i-\frac{1}{2}}}{\Delta x}, 
$$
 $$[\hat {D}_x \tilde\phi]^n_{i}=\frac{\tilde\phi^n_{i+1}-\tilde\phi^n_{i-1}}{2\Delta x}, \quad \ 
 \left[(D_x^\pm\tilde\phi)^2\right]^n_{i}=\frac{\left([D_x^+\tilde\phi]^n_{i}\right)^2+\left([D_x^-\tilde\phi]^n_{i}\right)^2}{2}.$$
The operator $\nabla \cdot M \nabla$ in $\cal A$ is discretized by
$[(\nabla \cdot  \tilde{M}\nabla)\tilde\phi]_{i}^n =  [D_x  \tilde M D_x \tilde\phi]_{i}^n,$
where the
approximate value of the mobility $M$ is calculated as
$$\tilde{M}_{i+\frac 12} =\varkappa \frac{\tilde c^n_{i }(1-\tilde c^n_{i })+\tilde c^n_{i+ 1}(1-\tilde c^n_{i+ 1})}2.$$
The discrete operator $\tilde{{\cal A}}$ for the Ni-based alloy phase field system is then defined as $\tilde{{\cal A}}(\tilde\varphi_{i}^n,\tilde\phi_{i}^n)^T
=(-[\nabla \cdot \tilde{M}\nabla\tilde\varphi]_{i}^n,\tilde\phi_{i}^n)^T$.
With the periodic boundary conditions or the homogeneous Neumann boundary conditions, 
we present  vital formula
\begin{equation}
\label{summation}
\begin{aligned}
-\sum_{i=1}^{N_x}\tilde \varphi^n_{i}\left[(D_x \tilde M D_x)\tilde \phi\right]^{n}_{i}\color{black}
=\frac 1 2
\sum_{i=1}^{N_x}
\left(\tilde M^n_{i+\frac 12}[D_x^+ \tilde \varphi]^{n}_{i} [D_x^+\tilde \phi]^{n}_{i}\right).
   \end{aligned}
\end{equation}
\color{black}

\color{black}
It follows from the Cauchy--Schwarz inequality  and \eqref{summation} that
\color{black}
\begin{equation}
\label{neq:02}
\left\{
\begin{aligned}
&\left<\tilde \phi^n_{i}[{A}_{11}\tilde\phi]_{i}^n\right>\geq 0,\\
&\left<\tilde \varphi^n_{i}[{A}_{11}\tilde\phi]_{i}^n\right>
=\left<\tilde \phi^n_{i}[{A}_{11}\tilde\varphi]_{i}^n\right>,
\\
&\left<\tilde \varphi^n_{i}[{A}_{11}\tilde\phi]_{i}^n\right>
\leq \left< \tilde \phi^n_{i}[{A}_{11}\tilde\phi]_{i}^n\right>^{1/2}\left< 
\tilde \varphi^n_{i}[{A}_{11} \tilde\varphi]_{i}^n\right>^{1/2},
\end{aligned}
\right.
\end{equation}
where $A_{11}=-\nabla \cdot M \nabla$ 
and $\left<\Box_{i} \right>$ is defined as $\left<\Box_i\right>:=\sum_{i=1}^{N_x}\Box_{i} \Delta x $.
Thus, 
we conclude that the discrete operator $\tilde{\cal A}$ for the Ni-based alloys phase field system is semi-positive.

With the aforementioned notations, we construct a semi-implicit scheme for the Ni-based alloys phase field system \cref{system-1} by the following steps.
In the first step,
the discretizations of the local free energies $E_i$ (i=1,~2,~3,~4) and the total free energy ${\cal E}$ at time $t_n$ are respectively defined as
\begin{equation}
\label{relations}
\left\{
\begin{aligned}
\left[\tilde{E}_1\right]_{i}^{n}= &E_1(\tilde c_{i}^n,\tilde \eta_{i}^n),\\ \left[\tilde{E}_2\right]_{i}^{n}=& E_2(\tilde c_{i}^n,\tilde \eta_{i}^n),\\
\left[\tilde{E}_3\right]_{i}^{n}=&\frac 1{2} \left[\tilde\epsilon^{\textnormal{el}}\right]_{i}^{n}:C:\left[\tilde\epsilon^{\textnormal{el}}\right]_{i}^{n},\\
\left[\tilde{E}_4\right]_{i}^{n}=&\frac{\gamma_c}{2}\left[(D_x^\pm\tilde{c})^2\right]^n_{i}
         +\frac{3\gamma_\eta}{2}\left[(D_x^\pm\tilde{\eta})^2\right]^n_{i},
   \end{aligned}
   \right.
\end{equation}
and
\begin{equation}
\label{freeenergy}
\begin{aligned}
\tilde{{\cal E}}^{n} :=\sum_{i=1}^{N_x}\left([\tilde{E}_1]^n_{i}+[\tilde{E}_2]^n_{i}+[\tilde{E}_3]^n_{i}+[\tilde{E}_4]^n_{i}\right)\Delta x.
\end{aligned}
\end{equation}

In \cref{relations}, the discrete elastic strain $\left[\tilde\epsilon^{\textnormal{el}}\right]_{i}^{n}$ is defined as
$$\left[\tilde\epsilon^{\textnormal{el}}\right]_{i}^{n}=\left[{\tilde\epsilon}\right]^n_{i}-\epsilon_0\left(\tilde c^n_{i}-\bar{c}^0\right),$$
where $\bar{c}^0=\frac{1}{|\Omega|}\int_\Omega c(x,0) \md x$ and the  heterogeneous strain $\left[\tilde{\epsilon}\right]^n_{i}$ is discretized as
$$\left[\tilde{\epsilon}\right]^n_{i}=[\hat{D}_x \tilde u]^n_{i}.$$
The discrete elastic strain energy is obtained by solving the following discretization system for the mechanical equilibrium equation
\begin{equation}
\label{mechanicalequilibrium-dis}
\left[\hat D_x\tilde\sigma^{\textnormal{el}}\right]_{i}^{n}=0,
\end{equation}
where $\left[ \tilde\sigma^{\textnormal{el}}\right]_i^{n}=C\left[\tilde\epsilon^{\textnormal{el}}\right]_{i}^{n}$.

In the second step,  a discrete form of the variational derivative ${\mathbf{G}}$ is constructed. 
By taking $\mathbf{V}(x_i, t_n) := \tilde{\mathbf{V}}^{n}_{i}$ and $\delta \mathbf{V}\big|_{x_i}:= \tilde{\mathbf{V}}^{n+1}_{i} - \tilde{\mathbf{V}}^{n}_{i}$,  
we  choose the discrete variational derivative  $\tilde{\mathbf{G}}_{i} $ such that the following summation formula exactly holds
\begin{eqnarray}
\label{DVD}
\begin{aligned}
\tilde{{\cal E}}^{n+1}-\tilde{{\cal E}}^{n}&
=\left<  \tilde{\mathbf{G}}_{i}  
\cdot \left(\tilde{\mathbf{V}}_{i}^{n+1}-\tilde{\mathbf{V}}_{i}^{n}\right) \right>.
\end{aligned}
\end{eqnarray}
 Equation (\ref{DVD})
can be viewed as a discrete form of  (\ref{condG}).
According to \eqref{neq:01} and \eqref{DVD}, we can derive the discrete variational derivative
to the following formula
\begin{equation*}
\tilde{\mathbf{G}} _{i} :=\sum_{K=1}^4 \tilde{\mathbf{G}}^K _{i} = \sum_{K=1}^3\mathbf{G}_K(\tilde{\mathbf{V}}_{i}^{n+1}, 
\tilde{\mathbf{V}}_{i}^{n}) + \mathbf{G}_4([\nabla \tilde{\mathbf{V}}^T]_{i}^{n+1}, 
[\nabla \tilde{\mathbf{V}}^T]_{i}^{n}) ,
\end{equation*}
where $\tilde{\mathbf{G}}^K_{i}$ are derived from $E_K$ with $K=1,\ 2,\ 3,$ and  $4$ in local free energy, respectively.  
Following the framework proposed in \cite{huang2020parallel}, we have that $\tilde{\mathbf{G}}^1_{i}$ is also in polynomial form. 
The cumbersome definition of $\tilde{\mathbf{G}}^1_{i}$ is not given  here but instead in Appendix B. 
$\tilde{\mathbf{G}}^3_{i}$ and $\tilde{\mathbf{G}}^4_{i}$  are respectively defined  as follows
\begin{equation}
\tilde{\mathbf{G}}^3_{i}:=
\left(
\begin{array}{l}
\begin{aligned}
-\epsilon_{0}C[\tilde\epsilon^{\textnormal{el}}]^{n+\frac 1 2}_{i}
  \\ 
0\qquad\qquad
    \end{aligned}
\end{array}\right),
\end{equation}
and
\begin{equation}
\tilde{\mathbf{G}}^4_{i}:=
\left(
\begin{array}{l}
\begin{aligned}
  -{\gamma_c} \left[D_x^2\tilde{c}\right]_{i}^{n+\frac 1 2}
  \\ 
  -{3\gamma_\eta}\left[D_x^2\tilde{\eta}\right]_{i}^{n+\frac 12}
    \end{aligned}
\end{array}\right).
\label{dvds-1}
\end{equation}

However,  since $E_2$ is not a polynomial of $\mathbf{V}$, there does not exist a $\tilde{\mathbf{G}}^2_{i}$  with polynomial form such that
\eqref{DVD} exactly holds.
Consider an alternative form of
$\tilde{\mathbf{G}}^2_{i}$ as 
\begin{equation}\label{D2}
    \tilde{\mathbf{G}}^2_{i}:=\frac\theta 4 \left(
 \begin{aligned}
  4\Psi[\tilde{c}_{i}^{n+1},\tilde{c}_{i}^{n}] +3 \eta^{n+\frac 1 2}_{i}\Psi[p_{i}^{n+1},p_{i}^{n}]
  +[\tilde\eta']^{n+\frac 1 2}_{i}\Psi[q_{i}^{n+1},q_{i}^{n}]\\
    3\tilde c^{n+\frac 1 2}_{i}\left(\Psi[p_{i}^{n+1},p_{i}^{n}]- \Psi[q_{i}^{n+1},q_{i}^{n}]
      \right) \qquad\qquad\end{aligned} \right)
\end{equation}
such that $\tilde{\mathbf{G}}^2_{i}\cdot\left(\tilde{\mathbf{V}}_{i}^{n+1}-\tilde{\mathbf{V}}_{i}^{n}\right)=[\tilde{E}_2]_{i}^{n+1}
-[\tilde{E}_2]_{i}^{n}$. 
Here $p_{i}^{n}=\tilde{c}_{i}^n\tilde{\eta}_{i}^n$, $q_{i}^n=\tilde{c}_{i}^n(4-3\tilde{\eta}_{i}^n)$,
and $\Psi[y_1,y_2]:=(\Psi(y_1)-\Psi(y_2))/(y_1-y_2)$ is the first order finite quotient of function $\Psi$.
Using the trapezoidal rule at the half-time level, 
we obtain a fully discretized scheme for the Ni-based alloys phase field system as:
\begin{equation}\left\{
\begin{aligned}
&\frac{\tilde{\mathbf{V}}_{i}^{n+1} - \tilde{\mathbf{V}}_{i}^{n}}{\Delta t_n} = -
\tilde{{\cal A}} \tilde{\mathbf{G}}_{i},\\
&\left[\hat D_x\tilde\sigma^{\textnormal{el}}\right]_{i}^{n+1}=0.
\end{aligned}
\right.
\label{scheme-1}
\end{equation} 
The stability of the proposed scheme  \eqref{scheme-1} is given by the following theorem.
\color{black}
\begin{theorem}\label{theorem:1}
For any given time step $\Delta t_n >0$, the numerical scheme \eqref{scheme-1} 
is unconditionally energy stable and  satisfies the following properties.
\begin{itemize}
\item[1.] Mass conservation law: $\left<\tilde{c}^{n+1}_{i}\right>=\left<\tilde{c}^{n}_{i}\right>$.
\item[2.] Energy dissipation law: $\tilde {\cal E}^{n+1}\leq \tilde {\cal E}^n$.
\end{itemize}
\end{theorem}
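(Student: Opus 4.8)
The plan is to read off both laws from the two structural facts already in place: the discrete variational-derivative identity \eqref{DVD} and the semi-positivity of $\tilde{\mathcal A}$ recorded in \eqref{neq:02}. I would dispose of mass conservation first. Writing $\tilde G^{(1)}_i$ for the first component of $\tilde{\mathbf G}_i$, the first row of \eqref{scheme-1} reads $(\tilde c^{n+1}_i-\tilde c^n_i)/\Delta t_n=[D_x\tilde M D_x\tilde G^{(1)}]_i$; multiplying by $\Delta x$, summing over $i$, and invoking \eqref{summation} with $\tilde\varphi\equiv 1$ (whose forward difference vanishes) — equivalently, noting the discrete divergence telescopes under periodic or homogeneous Neumann boundary conditions — the right-hand side vanishes, so $\left<\tilde c^{n+1}_i\right>=\left<\tilde c^n_i\right>$.

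For the energy dissipation law, the first task is to confirm that the assembled $\tilde{\mathbf G}_i=\sum_{K=1}^4\tilde{\mathbf G}^K_i$ genuinely realizes \eqref{DVD}. The piece $\tilde{\mathbf G}^1_i$ satisfies the pointwise identity $\tilde{\mathbf G}^1_i\cdot(\tilde{\mathbf V}^{n+1}_i-\tilde{\mathbf V}^n_i)=[\tilde E_1]^{n+1}_i-[\tilde E_1]^n_i$ by the polynomial DVD construction of \cite{huang2020parallel} recalled in Appendix B. For $\tilde{\mathbf G}^2_i$ I would verify the pointwise identity stated after \eqref{D2} by setting $p=c\eta$, $q=c\eta'$ with $\eta'=4-3\eta$, using $p^{n+1}_i-p^n_i=\eta^{n+\frac12}_i(c^{n+1}_i-c^n_i)+c^{n+\frac12}_i(\eta^{n+1}_i-\eta^n_i)$ (and likewise for $q$, with $[\tilde\eta']^{n+1}_i-[\tilde\eta']^n_i=-3(\eta^{n+1}_i-\eta^n_i)$) together with the definition of the finite quotient $\Psi[\cdot,\cdot]$, then collecting the coefficients of $(c^{n+1}_i-c^n_i)$ and $(\eta^{n+1}_i-\eta^n_i)$. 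The pieces $\tilde{\mathbf G}^3_i$ and $\tilde{\mathbf G}^4_i$ satisfy \eqref{DVD} only after summation: for $\tilde{\mathbf G}^4_i$ this follows from discrete summation by parts applied to $-[D_x^2\phi]^{n+\frac12}$ against $\phi^{n+1}-\phi^n$ for $\phi=\tilde c,\tilde\eta$ (boundary terms vanishing under either boundary condition), combined with $2a^{n+\frac12}(a^{n+1}-a^n)=(a^{n+1})^2-(a^n)^2$ applied to $a=[D_x^\pm\phi]$. For $\tilde{\mathbf G}^3_i$ I would mirror the continuous computation in \eqref{condG}: write $[\tilde E_3]^{n+1}_i-[\tilde E_3]^n_i=C[\tilde\epsilon^{\textnormal{el}}]^{n+\frac12}_i:\big([\tilde\epsilon^{\textnormal{el}}]^{n+1}_i-[\tilde\epsilon^{\textnormal{el}}]^n_i\big)$, split the elastic-strain increment into the composition part $-\epsilon_0(\tilde c^{n+1}_i-\tilde c^n_i)$ and the displacement part $[\hat D_x(\tilde u^{n+1}-\tilde u^n)]_i$, and annihilate the latter (up to a vanishing boundary term) by discrete summation by parts together with the discrete equilibrium \eqref{mechanicalequilibrium-dis} at both $t_n$ and $t_{n+1}$ — the discrete counterpart of $\delta{\cal E}/\delta\mathbf u=0$. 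Combining the four identities over $i$ with weight $\Delta x$ yields \eqref{DVD}.

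Granting \eqref{DVD}, the dissipation law is then immediate. Substituting the update \eqref{scheme-1} and using the block structure of $\tilde{\mathcal A}$ (identity on the $\eta$-component, $-D_x\tilde M D_x$ on the $c$-component), one gets $\tilde{\cal E}^{n+1}-\tilde{\cal E}^n=\left<\tilde{\mathbf G}_i\cdot(\tilde{\mathbf V}^{n+1}_i-\tilde{\mathbf V}^n_i)\right>=-\Delta t_n\left<\tilde{\mathbf G}_i\cdot(\tilde{\mathcal A}\tilde{\mathbf G})_i\right>=-\Delta t_n\big(\left<\tilde G^{(1)}_i[A_{11}\tilde G^{(1)}]_i\right>+\left<(\tilde G^{(2)}_i)^2\right>\big)\le 0$, where the first bracket is nonnegative by the first line of \eqref{neq:02} (equivalently by \eqref{summation} and $\tilde M_{i+\frac12}\ge 0$, which holds since $\tilde c\in(0,1)$) and the second is a sum of squares. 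Hence $\tilde{\cal E}^{n+1}\le\tilde{\cal E}^n$ for every $\Delta t_n>0$, i.e. the scheme is unconditionally energy stable.

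I expect the elastic term to be the main obstacle. Unlike $E_1$, $E_2$ and $E_4$, the energy $E_3$ couples $c$ and $\nabla\mathbf u$, so there is no pointwise discrete variational derivative and one must pass to the summed identity, carefully invoking \eqref{mechanicalequilibrium-dis} at both time levels and checking that the discrete summation-by-parts boundary contributions cancel under \eqref{boundaryequ} — the discrete analogue of the $B$-terms appearing in \eqref{DIS1} and \eqref{condG}. A secondary point worth stating explicitly is that the logarithmic functional requires $\tilde c_i$, $\tilde c_i\tilde\eta_i$ and $\tilde c_i[\tilde\eta']_i$ to stay in $(0,1)$ so that $\Psi$ and its finite quotients $\Psi[\cdot,\cdot]$ are well defined; I would either adopt this as a standing hypothesis consistent with the ranges $c\in(0,0.25)$, $\eta\in(0,1)$, or remark that it is enforced at the discrete level in the implementation.
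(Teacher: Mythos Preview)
Your proposal is correct and follows essentially the same route as the paper: mass conservation via \eqref{summation} with $\tilde\varphi\equiv 1$, then verification of the summed DVD identity \eqref{DVD} piece by piece (pointwise for $\tilde{\mathbf G}^1,\tilde{\mathbf G}^2$, summation-by-parts for $\tilde{\mathbf G}^4$, and for $\tilde{\mathbf G}^3$ the quadratic splitting of $[\tilde E_3]^{n+1}-[\tilde E_3]^n$ with the displacement increment killed by the discrete equilibrium \eqref{mechanicalequilibrium-dis} at both levels), followed by semi-positivity of $\tilde{\mathcal A}$ from \eqref{neq:02}. Your treatment is in fact more explicit than the paper's in several places---notably the algebraic verification of the $\tilde{\mathbf G}^2$ identity via the bilinear increments of $p=c\eta$ and $q=c\eta'$, and the well-definedness caveat on $\Psi[\cdot,\cdot]$---but the underlying argument is the same.
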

\color{black}
\begin{proof}
According to   \eqref{summation} and \eqref{scheme-1}, by setting $ \varphi=1$ and $ \phi =\frac{\delta {\cal E}}{\delta c}$,
we have the following mass conservation law
\begin{equation}
\left<\tilde{c}^{n+1}_{i}\right>-\left<\tilde{c}^{n}_{i}\right>=\Delta t_n \left<\left[(D_x \tilde M D_x)\tilde \phi\right]^{n}_{i}
\right>=0.
\end{equation}
To prove the energy dissipation law,  we shall show that the summation formula \eqref{DVD} is exactly held with the definitions of discrete variational derivatives $\tilde{\mathbf{G}}_{i}$.
Similar to the proof of \cref{summation}, we obtain the following equation from the discrete mechanical equilibrium scheme \cref{mechanicalequilibrium-dis}
\begin{equation}
 \left<[\tilde\epsilon]^{n}_{i}[\tilde\epsilon^{\textnormal{el}}]^{n+1}_{i}\right>=
  \left<[\tilde\epsilon]^{n+1}_{i}[\tilde\epsilon^{\textnormal{el}}]^{n+1}_{i}\right>=0.
\end{equation}
Then, we get the following formula
\begin{equation}
\label{localsum1}
\begin{aligned}
&\left<\left[\tilde E_{\textnormal{el}}\right]_{i}^{n+1}-\left[\tilde E_{\textnormal{el}}\right]_{i}^{n}\right>\\
&\qquad=-C\left<\left[\tilde\epsilon^{\textnormal{el}}\right]_{i}^{n+\frac 12}(\left[\tilde\epsilon^{0}\right]_{i}^{n+1}-\left[\tilde\epsilon^{0}\right]_{i}^{n})\right>
+C\left<(\left[\tilde \epsilon\right]_{i}^{n+1}-\left[\tilde \epsilon\right]_{i}^{n})\left[\tilde\epsilon^{\textnormal{el}}\right]_{i}^{n+\frac 12}\right>
\\&\qquad=-\left<\epsilon_0 C\left[\tilde\epsilon^{\textnormal{el}}\right]_{i}^{n+\frac 12}(\tilde c_{i}^{n+1}-\tilde c_{i}^{n})\right>
=\left<\tilde{\mathbf{G}}^3_{i}\cdot \left({\tilde{\mathbf{V}}_{i}^{n+1} - \tilde{\mathbf{V}}_{i}^{n}}\right)\right>.
\end{aligned}
\end{equation}
Using the definitions of $\tilde{\mathbf{G}}^1_{i}$, $\tilde{\mathbf{G}}^2_{i}$, $\tilde{\mathbf{G}}^4_{i}$ and  \cref{summation},
we have
\begin{equation}\label{localsum2}
\left\{
\begin{aligned}
&\left<\left[\tilde E_1\right]_{i}^{n+1}-\left[\tilde E_1\right]_{i}^{n}\right>=\left<\tilde{\mathbf{G}}^1_{i}\cdot \left({\tilde{\mathbf{V}}_{i}^{n+1} - \tilde{\mathbf{V}}_{i}^{n}}\right)\right>,\\
&\left<\left[\tilde E_2\right]_{i}^{n+1}-\left[\tilde E_2\right]_{i}^{n}\right>=\left<\tilde{\mathbf{G}}^2_{i}\cdot \left({\tilde{\mathbf{V}}_{i}^{n+1} - \tilde{\mathbf{V}}_{i}^{n}}\right)\right>,\\
&\left<\left[\tilde E_4\right]_{i}^{n+1}-\left[\tilde E_4\right]_{i}^{n}\right>=\left<\tilde{\mathbf{G}}^4_{i}\cdot \left({\tilde{\mathbf{V}}_{i}^{n+1} - \tilde{\mathbf{V}}_{i}^{n}}\right)\right>.\\\end{aligned}\right.
\end{equation}
By combining \eqref{neq:02}, \eqref{DVD},  \eqref{scheme-1},  \cref{localsum1}, and \cref{localsum2},
the discrete free energy induced by scheme \eqref{scheme-1} satisfies
\begin{equation}
\begin{aligned}
   \frac{\tilde {\cal E}^{n+1}- \tilde {\cal E}^n}{\Delta t_n}
   =\left<
 \tilde{\mathbf{G}}_{i}  \cdot \frac{\tilde{\mathbf{V}}_{i}^{n+1} - \tilde{\mathbf{V}}_{i}^{n}}{\Delta t_n}\right>=-\left<
 \tilde{\mathbf{G}}_{i}  \cdot 
\tilde{{\cal A}} \tilde{\mathbf{G}}_{i}  \right>\leq 0. 
   \end{aligned}
   \label{DF}
\end{equation}
\end{proof}
\color{black}

\color{black}
Since the total free energy functional $\cal E$ is non-convex, the existence and uniqueness of the solution  for system \eqref{scheme-1} (especially for large values of $\Delta t$) are not immediate. 
Nonetheless, the proof of Theorem 2.1 does not require a unique solution to \eqref{scheme-1}.
Even if one could prove that scheme \eqref{scheme-1} is  unconditionally energy stable, it is numerically unstable when applied  to  solve the Ni-based alloys phase field system.
The reason is that the numerical computation  of the first order finite quotient  $\Psi[y_1,y_2]$ is unstable and inaccurate when $y_2-y_1$ is close to zero. 
Unfortunately, in  the Ni-based alloys phase field system, $\delta p_{i}=p_{i}^{n+1}-p_{i}^n$  or $\delta q_i =q_{i}^{n+1}-q_{i}^n$ is often close to zero,  which indicates that the numerical calculation of $\tilde{\mathbf{G}}^2_{i}$ by \eqref{D2} is numerically unstable and inaccurate in this situation. 
As a result, the scheme \eqref{scheme-1} may lead to an inaccurate or non-physical solution with the existence of the complicated function in $E_2$.

To overcome the difficulty, we propose a stable and highly accurate approximation for $\tilde{\mathbf{G}}^2_{i}$ in the third step, 
which was first introduced by us in \cite{huang2020parallel} for solving a coupled Allen--Cahn and Cahn--Hilliard system. 
Let us assume $\Psi_S(y,\delta)=\Psi(y)+\sum_{s=1}^{2S} \frac{\delta^s \textnormal{d}^s\Psi(y)}{s! (\textnormal d y)^s}$,
which is a polynomial of  the small parameter $\delta$. The first order quotient $\Psi[y_1,y_2]$
can be approximately computed by 
$$\Psi[y_1,y_2]\approx \Psi_S[y_1,y_2]=\frac{\Psi_S((y_1+y_2)/2,\delta)-\Psi_S((y_1+y_2)/2,-\delta)}{2\delta},$$
which is a polynomial of the small parameter $\delta$ with $\delta = (y_2-y_1)/2$.
Then,
$ \tilde{\mathbf{G}}^2_{i}$ is approximated as
\begin{equation}\label{D2-1}   
\begin{aligned}
 \tilde{\mathbf{G}}^{2,S}_{i}:&=\frac\theta 4 \left(
 \begin{aligned}
  4\Psi_S[\tilde{c}_{i}^{n+1},\tilde{c}_{i}^{n}] +3 \eta^{n+\frac 1 2}_{i}\Psi_S[p_{i}^{n+1},p_{i}^{n}]
  +[\tilde\eta']^{n+\frac 1 2}_{i}\Psi_S[q_{i}^{n+1},q_{i}^{n}]\\
    3\tilde c^{n+\frac 1 2}_{i}\left(\Psi_S[p_{i}^{n+1},p_{i}^{n}]- \Psi_S[q_{i}^{n+1},q_{i}^{n}]\right)\qquad\qquad
       \end{aligned} \right)\\
       &=\tilde{\mathbf{G}}^2_{i}-{\mathbf{R}}^{S}_{i}.
       \end{aligned}
\end{equation}
Based on the Taylor expansion of $\Psi$, we have the truncation error ${{\mathbf R}}^S_{i}\rightarrow 0$ as $S\rightarrow\infty$.

With the approximation, 
the scheme \eqref{scheme-1} for the Ni-bases phase field system is replaced with
\begin{equation}
\label{NSOA-1}
\left\{
\begin{aligned}
&\frac{\tilde{\mathbf{V}}_{i}^{n+1} - \tilde{\mathbf{V}}_{i}^{n}}{\Delta t_n} = -
\tilde{{\cal A}} \tilde{\mathbf{G}}_{i}^S ,\\
&\left[\hat D_x\tilde\sigma^{\textnormal{el}}\right]_{i}^{n+1}=0,
\end{aligned}
\right.
\end{equation}
where the approximate discrete variational derivative $\tilde{\mathbf{G}}_{i}^S $ is defined as
\begin{equation}
\tilde{\mathbf{G}}_{i}^S :=\tilde{\mathbf{G}}^1_{i}+\tilde{\mathbf{G}}_{i}^{2,S}+\tilde{\mathbf{G}}_{i}^3 + \tilde{\mathbf{G}}_{i}^4.
\end{equation}
It should be noted that the solution obtained by using scheme \eqref{NSOA-1} is usually different from the one by scheme \eqref{scheme-1} due to 
the different approximations made by the two schemes.
For  simplicity, we still denote  the solution  of scheme \eqref{NSOA-1} as $\tilde{\mathbf{V}}_{i}^{n+1}$.
According to \eqref{DVD} and \eqref{D2-1}, we have 
\begin{eqnarray}
\label{eq:19}\qquad\quad
\tilde{{\cal E}}^{n+1}-\tilde{{\cal E}}^{n}=\left<  \tilde{\mathbf{G}}_{i}  
\cdot \left(\tilde{\mathbf{V}}_{i}^{n+1}-\tilde{\mathbf{V}}_{i}^{n}\right) \right>
=\left< \left( \tilde{\mathbf{G}}_{i}^{S} +{\mathbf R}^S_{i}\right) 
\cdot \left(\tilde{\mathbf{V}}_{i}^{n+1}-\tilde{\mathbf{V}}_{i}^{n}\right) \right>.~~~~
\end{eqnarray}
The stability of the proposed scheme  \eqref{NSOA-1} is given by the following theorem.

\begin{theorem}\label{theorem2}
Given 
 $\Delta t_n>0$, there exists an integer $S_0$ such that  the scheme~\eqref{NSOA-1} is unconditionally energy stable and satisfies the following properties.
 \begin{itemize}
\item[1.] Mass conservation law: $\left<\tilde{c}^{n+1}_{i}\right>=\left<\tilde{c}^{n}_{i}\right>$.
\item[2.] Energy dissipation law: $\tilde {\cal E}^{n+1}\leq \tilde {\cal E}^n$ for any $S>S_0$.
\end{itemize}
\end{theorem}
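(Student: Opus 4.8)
The plan is to follow the proof of Theorem~\ref{theorem:1}, treating the polynomial approximation $\tilde{\mathbf{G}}^{2,S}_i=\tilde{\mathbf{G}}^2_i-\mathbf{R}^S_i$ as a controlled perturbation of the energy-stable scheme \eqref{scheme-1} and absorbing the resulting error into the nonpositive dissipation term once $S$ is large; this mirrors the strategy used for the coupled Allen--Cahn/Cahn--Hilliard system in \cite{huang2020parallel}. Mass conservation requires no new work and is independent of $S$: the first component of \eqref{NSOA-1} is $(\tilde c^{n+1}_i-\tilde c^n_i)/\Delta t_n=[D_x\tilde M D_x\tilde G^{S,c}]_i$ for the first component $\tilde G^{S,c}$ of $\tilde{\mathbf{G}}^S$, so summing over $i$ and applying \eqref{summation} with $\varphi\equiv 1$ yields $\langle\tilde c^{n+1}_i\rangle=\langle\tilde c^n_i\rangle$, exactly as in Theorem~\ref{theorem:1}.

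For energy dissipation I would start from the exact discrete identity \eqref{eq:19}, valid for every $S$, and substitute $\tilde{\mathbf{V}}^{n+1}_i-\tilde{\mathbf{V}}^n_i=-\Delta t_n\,\tilde{\mathcal{A}}\tilde{\mathbf{G}}^S_i$ from \eqref{NSOA-1} to get
\[
\tilde{\mathcal{E}}^{n+1}-\tilde{\mathcal{E}}^n=-\Delta t_n\big\langle\tilde{\mathbf{G}}^S_i\cdot\tilde{\mathcal{A}}\tilde{\mathbf{G}}^S_i\big\rangle-\Delta t_n\big\langle\mathbf{R}^S_i\cdot\tilde{\mathcal{A}}\tilde{\mathbf{G}}^S_i\big\rangle .
\]
The first term is nonpositive since $\tilde{\mathcal{A}}$ is symmetric and semi-positive, which is what \eqref{neq:02} says blockwise. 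For the cross term I would apply the Cauchy--Schwarz inequality in the $\tilde{\mathcal{A}}$-seminorm (the $c$-block through the first and third lines of \eqref{neq:02}, the $\eta$-block trivially), obtaining $|\langle\mathbf{R}^S_i\cdot\tilde{\mathcal{A}}\tilde{\mathbf{G}}^S_i\rangle|\le\langle\mathbf{R}^S_i\cdot\tilde{\mathcal{A}}\mathbf{R}^S_i\rangle^{1/2}\langle\tilde{\mathbf{G}}^S_i\cdot\tilde{\mathcal{A}}\tilde{\mathbf{G}}^S_i\rangle^{1/2}$. Hence $\tilde{\mathcal{E}}^{n+1}\le\tilde{\mathcal{E}}^n$ as soon as $\langle\mathbf{R}^S_i\cdot\tilde{\mathcal{A}}\mathbf{R}^S_i\rangle\le\langle\tilde{\mathbf{G}}^S_i\cdot\tilde{\mathcal{A}}\tilde{\mathbf{G}}^S_i\rangle$; and when the right-hand side vanishes the scheme forces $\tilde{\mathbf{V}}^{n+1}_i=\tilde{\mathbf{V}}^n_i$, so $\tilde{\mathcal{E}}^{n+1}=\tilde{\mathcal{E}}^n$ trivially.

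It remains to produce $S_0$ making this inequality hold for all $S>S_0$. Here I would use that $\mathbf{R}^S_i\to 0$ as $S\to\infty$, recorded just before the theorem: since $\Psi_S[\cdot,\cdot]$ is a symmetric difference of Taylor polynomials, each entry of $\mathbf{R}^S_i$ is of order $\delta_i^{\,2S}$ in the increments $\delta c_i,\delta p_i,\delta q_i$, so $\langle\mathbf{R}^S_i\cdot\tilde{\mathcal{A}}\mathbf{R}^S_i\rangle$ decays geometrically in $S$, at a rate set by the increments and by the distance of the discrete solution to the endpoints $c=0,0.25$ and $\eta=0,1$; on the other hand, the semi-positivity and boundedness of $\tilde{\mathcal{A}}$ give $\langle\tilde{\mathbf{G}}^S_i\cdot\tilde{\mathcal{A}}\tilde{\mathbf{G}}^S_i\rangle\ge(\Delta t_n^2\|\tilde{\mathcal{A}}\|)^{-1}\|\tilde{\mathbf{V}}^{n+1}-\tilde{\mathbf{V}}^n\|^2$, which is positive whenever $\tilde{\mathbf{V}}^{n+1}\ne\tilde{\mathbf{V}}^n$. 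Thus when $\|\tilde{\mathbf{V}}^{n+1}-\tilde{\mathbf{V}}^n\|$ is small the inequality is automatic already for moderate $S$ (the remainder being a higher power of the increment than the dissipation term), and when $\|\tilde{\mathbf{V}}^{n+1}-\tilde{\mathbf{V}}^n\|$ is bounded below one simply takes $S$ large enough that the geometrically small remainder drops below the positive lower bound, giving an $S_0=S_0(\Delta t_n,\Delta x,\tilde{\mathbf{V}}^n)$ as claimed.

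The main obstacle is making this last step uniform in $S$. Both $\langle\tilde{\mathbf{G}}^S_i\cdot\tilde{\mathcal{A}}\tilde{\mathbf{G}}^S_i\rangle$ and the increment $\tilde{\mathbf{V}}^{n+1}_i-\tilde{\mathbf{V}}^n_i$ depend on $S$ through the implicitly defined (and possibly non-unique) solution of \eqref{NSOA-1}, and the constant hidden in the $\delta^{2S}$ estimate for $\mathbf{R}^S_i$ degrades as the discrete solution approaches the endpoints of $(0,0.25)$ or $(0,1)$, where the logarithmic $\Psi$ and its derivatives blow up, or when a component of $\mathbf{V}$ changes by a large relative factor in one step. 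A clean proof therefore seems to need an a priori confinement of the discrete solution to a fixed compact subset of $(0,0.25)\times(0,1)$ on which all derivatives of $\Psi$ are uniformly bounded, together with a uniform control of the step-to-step increments; absent such an estimate, one can still split into a near-equilibrium regime, where the higher-order vanishing of $\mathbf{R}^S_i$ dominates for any fixed $S$, and an away-from-equilibrium regime, where the dissipation term has a uniform positive lower bound and large $S$ suffices. I expect this compactness and uniformity bookkeeping, rather than any single inequality, to be the real content of the theorem.
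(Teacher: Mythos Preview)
Your approach is essentially the paper's: mass conservation is handled identically, and for energy dissipation you start from the exact identity \eqref{eq:19}, substitute the scheme \eqref{NSOA-1}, and bound the cross term by Cauchy--Schwarz in the $\tilde{\mathcal{A}}$-seminorm \eqref{neq:02} before invoking $\mathbf{R}^S_i\to 0$. The only cosmetic difference is the choice of pivot in the splitting: the paper expands $-\langle\tilde{\mathbf{G}}_i\cdot\tilde{\mathcal{A}}\tilde{\mathbf{G}}^S_i\rangle$ as $-\langle\tilde{\mathbf{G}}_i\cdot\tilde{\mathcal{A}}\tilde{\mathbf{G}}_i\rangle+\langle\tilde{\mathbf{G}}_i\cdot\tilde{\mathcal{A}}\mathbf{R}^S_i\rangle$ and compares $\langle\mathbf{R}^S_i\cdot\tilde{\mathcal{A}}\mathbf{R}^S_i\rangle$ against $\varrho:=\langle\tilde{\mathbf{G}}_i\cdot\tilde{\mathcal{A}}\tilde{\mathbf{G}}_i\rangle$, whereas you compare it against $\langle\tilde{\mathbf{G}}^S_i\cdot\tilde{\mathcal{A}}\tilde{\mathbf{G}}^S_i\rangle$; the resulting threshold condition is the same. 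Your closing discussion of the $S$-dependence of the implicit solution, the possible degeneration near the endpoints of $(0,0.25)\times(0,1)$, and the need for a uniform positive lower bound on the dissipation term is in fact more scrupulous than the paper's own proof, which simply asserts the existence of $S_0$ from $\mathbf{R}^S_i\to 0$ without confronting that $\varrho$ itself varies with $S$ through $\tilde{\mathbf{V}}^{n+1}$.
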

\begin{proof}
Similar to \cref{theorem:1}, one can complete the proof of mass conservation law. 
According to  \eqref{neq:02}, \eqref{NSOA-1}, and \eqref{eq:19},
the discrete free energy decided by scheme \eqref{NSOA-1} satisfies
\begin{equation}\label{eq::27}
\begin{aligned}
   &\frac{\tilde {\cal E}^{n+1}- \tilde {\cal E}^n}{\Delta t_n}
   = \left<
 \tilde{\mathbf{G}}_{i} \cdot \frac{\tilde{\mathbf{V}}_{i}^{n+1} - \tilde{\mathbf{V}}_{i}^{n}}{\Delta t_n} \right>=-\left<
\tilde{\mathbf{G}}_{i} \cdot 
\tilde{{\cal A}} \tilde{\mathbf{G}}^S_{i}\right>\\
&\qquad=-
\left<
\tilde{\mathbf{G}}_{i}\cdot 
\tilde{{\cal A}} \tilde{\mathbf{G}}_{i} -
\tilde{\mathbf{G}}_{i} \cdot 
\tilde{{\cal A}} {\mathbf R}_{i}^S\right> \\
&\qquad \leq-\left<
\tilde{\mathbf{G}}_{i}  \cdot 
\tilde{{\cal A}}\tilde{\mathbf{G}}_{i}\right> +\left<
\tilde{\mathbf{G}}_{i}  \cdot 
\tilde{{\cal A}} \tilde{\mathbf{G}}_{i}\right>^{1/2}\left<
{\mathbf R}^S_{i} \cdot \tilde{{\cal A}} {\mathbf R}^S_{i} \right>^{1/2}.
   \end{aligned}
\end{equation}
If ~$\left<
\tilde{\mathbf{G}}_{i}  \cdot 
\tilde{{\cal A}} \tilde{\mathbf{G}}_{i}\right>=0$,
 we have $\tilde {\cal E}^{n+1}\leq \tilde {\cal E}^n$. Otherwise, we set
 $\left<
\tilde{\mathbf{G}}_{i}  \cdot 
\tilde{{\cal A}} \tilde{\mathbf{G}}_{i}\right>=\varrho>0$.
Since ${\mathbf R}^S_{i}\rightarrow 0$ as $S\rightarrow\infty$, there exists an integer $S_0>0$ such that
$\left<
{\mathbf R}^S_{i} \cdot \tilde{{\cal A}} {\mathbf R}^S_{i}\right> \leq \varrho/2$ holds for any $S>S_0$.
Thus, we have $\tilde {\cal E}^{n+1}\leq \tilde {\cal E}^n$, which completes the proof of energy dissipation law.
\end{proof}

\section{Newton--Krylov--Schwarz solver with adaptive time stepping}

By discretizing the Ni-based phase field system with the proposed energy stable scheme \eqref{NSOA-1}, 
a discrete nonlinear equation system ${\cal F}(\mathbf{X}^{n})=0$ is  
constructed and required to be solved at each time step. 
We omit the superscript $n$ in the 
remainder of the section. The unknown $\mathbf{X}$ is organized by points, i.e.  
$\mathbf{X}=(\tilde c_{1},\, \tilde \eta_{1}$, $ \tilde{u}_{1},$ $ \tilde c_{2},\, \tilde\eta_{2}$, $ \tilde {u}_2,\, \cdots)^{T}$, to maintain the strong coupling of the concentration field and the order parameter.
We solve the nonlinear system on a parallel supercomputer 
with the Netwon-Krylov-Schwarz (NKS) type algorithm \cite{NKS}.
Assume $\mathbf{X}_{m}$ is the current approximate solution, then a new solution $\mathbf{X}_{m+1}$ can be obtained by the following steps.
 \begin{itemize}
 \item Compute {an} inexact Newton direction $\mathbf{S}_m$ by solving the following Jacobian system:
 \begin{equation}\label{eq:line-system}
\begin{array}{ll}
J_m\mathbf{S}_m=-{\cal F}(\mathbf{X}_m),
\end{array}
\end{equation}
until 
$\| J_m \mathbf{S}_m+{\cal F}(\mathbf{X}_m)\| \leq  \textnormal{max} \{\xi_r\|{\cal F}(\mathbf{X}_m)\|, \xi_a\}$.
Here  $J_m:=\nabla {\cal F}(\mathbf{X}_m)$ is the Jacobian matrix at the solution $\mathbf{X}_m$, and 
 $\xi_r, \xi_\alpha \geq 0$ are the relative and absolute tolerances for the linear iteration, respectively. 

 \item Calculate a new solution through $\mathbf{X}_{m+1}=\mathbf{X}_{m} +\lambda_m\mathbf{S}_m$, where the step length $\lambda_{m}\in (0,1]$ is determined  by a line search procedure \cite{NMFU}.
\end{itemize}
The   stopping condition for the nonlinear iteration is set as
\begin{equation}
\|{\cal F}(\mathbf{X}_{m+1})\| \leq \max\{\varepsilon_r
\|{\cal F}(\mathbf{X}_0)\|, \varepsilon_a\},
\end{equation}
where $\varepsilon_r, \varepsilon_a \geq 0$ are the relative and absolute tolerances for the 
nonlinear iteration, respectively. 

In the NKS algorithm, the Jacobian system \eqref{eq:line-system} is solved by using an  iterative method based on Krylov subspace together with an additive Schwarz type preconditioner. 
In particular, we employ the restarted Generalized Minimal Residual (GMRES) method \cite{gmres} as the iterative solver and consider three preconditioners including
the classical additive Schwarz (AS) preconditioner 
\cite{DDA},  the left restricted additive Schwarz (left-RAS, \cite{cai99sisc}) 
preconditioner, and the right restricted  additive Schwarz (right-RAS, \cite{cai03sinum_ash}) 
preconditioner, for comparison purpose. We refer the readers to \cite{cai03sinum_ash, DDA, cai99sisc}  for more details about the three preconditioners.

The Ni-based alloys phase field system usually contains 
multiple time scales that may vary in orders of magnitude during the nucleation, the growth, and the coarsening of $\gamma'$ particles. 
Therefore an adaptive control of the time step size is necessary, 
in which the time step size is selected based on the desired solution accuracy and the dynamic 
features of the system. According to \cref{theorem2}, the implicit scheme \eqref{NSOA-1}
is unconditional energy stable and the time step size can be chosen from a wide range.
Following the idea of  \cite{huang2020parallel, qiao2011adaptive, PFCDVD, zhang2013},  the time step size at  the $n $-th time step is predicted to
\begin{equation}
\label{sencondt}
\tilde{\Delta} t_n= \max \left(\Delta t_{\min},\frac{\Delta t_{\max}}{\sqrt{1+\zeta \mathbf{X}_d'(t_n)^2}} \right),
\end{equation}
where $\mathbf{X}_d'(t_n) =\frac{\|\mathbf{X}^{n}-\mathbf{X}^{n-1}\|}{\Delta t_{n-1}}$ 
corresponds to the change rate of numerical solutions on the two previous time steps, 
and $\zeta$ is a positive pre-chosen parameter. In \eqref{sencondt}, $\Delta t_{\min}$ and 
$\Delta t_{\max}$ are defined as the lower and upper bounds of the time step size.
Then we use the NKS algorithm  to solve the discrete system \eqref{NSOA-1} with the predicted time step size $\tilde{\Delta} t_n$. If the NKS solver diverges 
with the currently predicted time step size $\tilde{\Delta} t_n$, a smaller predicted time step size $\tilde{\Delta} t_n:=\tilde{\Delta} t_n/\sqrt 2 $ is chosen to restart the NKS algorithm and $\zeta:=2\zeta$.
The loop is broken down and the time step size $\Delta t_n$ is set to be $\tilde{\Delta} t_n$ until the NKS solver converges.

\section{Numerical experiments}
In this section, we investigate the numerical behavior and parallel performance of the proposed algorithm for the Ni-based alloys phase field system \cref{system-1}.
We carry out several three dimensional tests to validate the discretization of the proposed algorithm and study the morphology evolution of $\gamma'$ precipitates in Ni-based alloys during aging process. 
We mainly focus on (1) the morphology evolution of $\gamma'$ particles during the nucleation, the growth, and the coarsening; (2) the performance of the NKS algorithm
with different preconditioners; and (3) the parallel scalability of the proposed algorithm.

The algorithm for the Ni-based alloys phase field system is implemented on top of the Portable, Extensible Toolkits for Scientific computations (PETSc) library \cite{balay2019petsc}.
The numerical experiments are carried out on the BSCC-A6 supercomputer.
The computing nodes of BSCC-A6 are
comprised of two 32-core AMD CPUs with 256 GB local memory,
and are interconnected via a proprietary high performance network. In the numerical
experiments we use all 64 CPU cores in each node and assign one subdomain to each
core.
 In the discrete scheme \cref{NSOA-1}, we set
$S = 10$ so that the error coming from the Taylor approximation can be ignored.
The stopping conditions for the nonlinear and linear iterations are set as 
 $\varepsilon_r =  10^{2} \xi_r= 1\times10^{-8}$ and
 $\varepsilon_a =  10^{-3} \xi_a= 1\times10^{-6}$, respectively.

\subsection{The shape evolution of a single $\gamma'$ particle}
The shape of the $\gamma'$ precipitate mainly depends on the balance of the interfacial energy and the elastic energy.
To understand the elastic energy contribution to the shape evolution of the $\gamma'$
precipitate, we run a set of simulations by multiplying Hooke's tensor by a dimensionless parameter ${\cal L}$,
which changes the ratio of the  elastic energy and the interfacial energy.  According to \cite{mueller19983d}, 
doubling the elastic energy $E_{\textnormal{el}}$ achieves a similar effect of the double average characteristic length of the particles.
In all simulations, the computational domain $\Omega$ is covered by an $80\times80\times 80$ uniform mesh with $h=\Delta x=\Delta y =\Delta z=0.25$.
The time step size is initially set as $\Delta t_1=0.01$ and then adaptively controlled with 
$\zeta =100$, $\Delta t_{\textnormal{min}}=0.01$, and $\Delta t_{\textnormal{max}}=2$.
A spherical $\gamma'$ particle is embedded in the $\gamma$ matrix and located at the center of computational domain $\Omega$ with radius $r=7.5$. 
The initial values are set as $c=0.238,\ \eta =0.01$  for  $\gamma'$ phase  and $c=0.1375,\ \eta =0.99$ for  $\gamma$ phase, respectively.

Four simulations with ${\cal L}=0,\,1,\,3,$ and 5 are considered. The simulation with ${\cal L}=0$ is corresponding to a standard phase field model without the elastic energy, which is
taken as a comparison. 
The simulations with ${\cal L}=1,\,3,\,5$ are  equivalent to the simulations with initial radius $r=7.5,\,22.5,$ and $37.5$, respectively.
The shape evolutions of a single $\gamma'$ particle for the four simulations are given in Fig.~\ref{fig:example1}. As the particle size increases, the equilibrium  shape of the $\gamma'$ particle
transitions from spherical to approximately cubic.
We plot the evolutions of the total free-energy, the interfacial energy, and the elastic energy for ${\cal L}=1,\,3,\,5$ in Fig.~\ref{fig:example1.2}, which clearly indicates that the proposed scheme satisfies the energy dissipation law even with a relatively large time step size.
By combining Fig.~\ref{fig:example1} and Fig.~\ref{fig:example1.2}, we found that the interfacial energy will dominate in the decay of the total energy, and the equilibrium particle shape
will be near-spherical as the particle size is small. On the contrary, the elastic energy gradually dominates in the decay of the total energy, and the equilibrium particle shape
will approach a cube as the particle size becomes larger.
The history of the time step size is also displayed in \ref{fig:example1.2},
which shows the time step size is successfully adjusted from 
$\Delta t_{\min}$ to $\Delta t_{\max}$ by two orders of magnitude. 

\begin{figure}[H]
\centering
{\subfloat[$t=10$]{\includegraphics[width=0.37\textwidth]{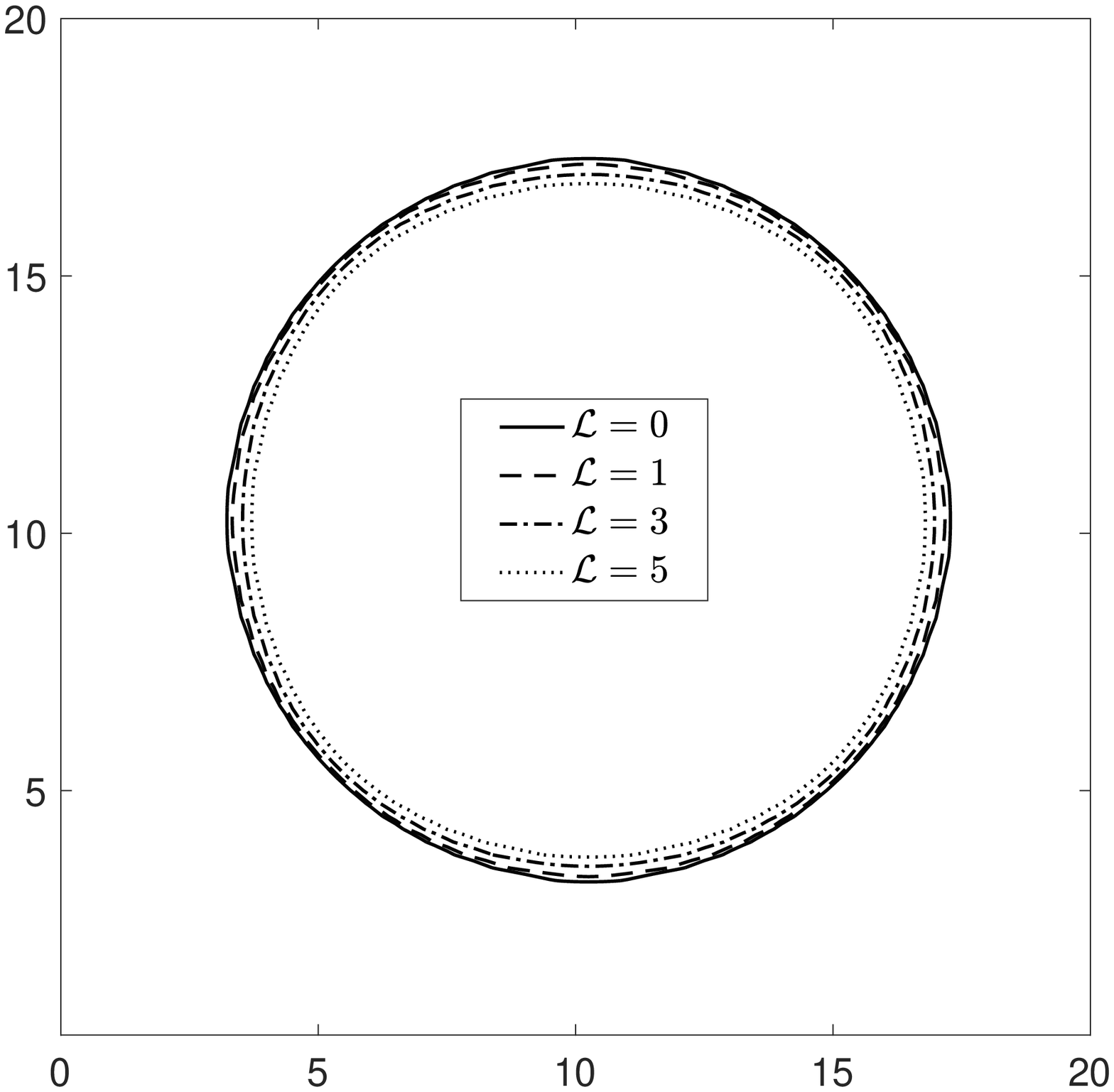}}}~
\subfloat[$t=50$]{\includegraphics[width=0.37\textwidth]{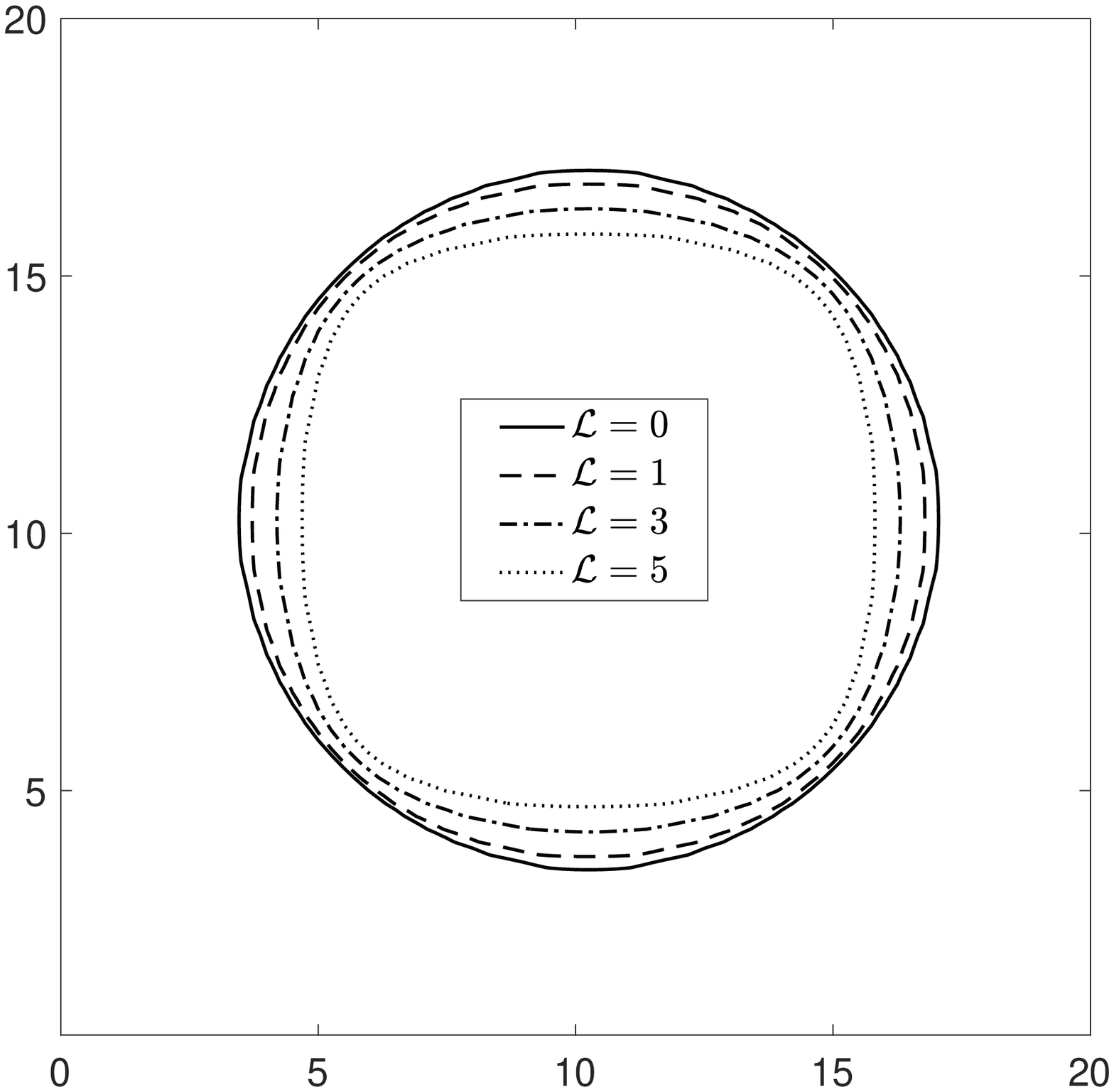}}~\\
\subfloat[$t=100$]{\includegraphics[width=0.37\textwidth]{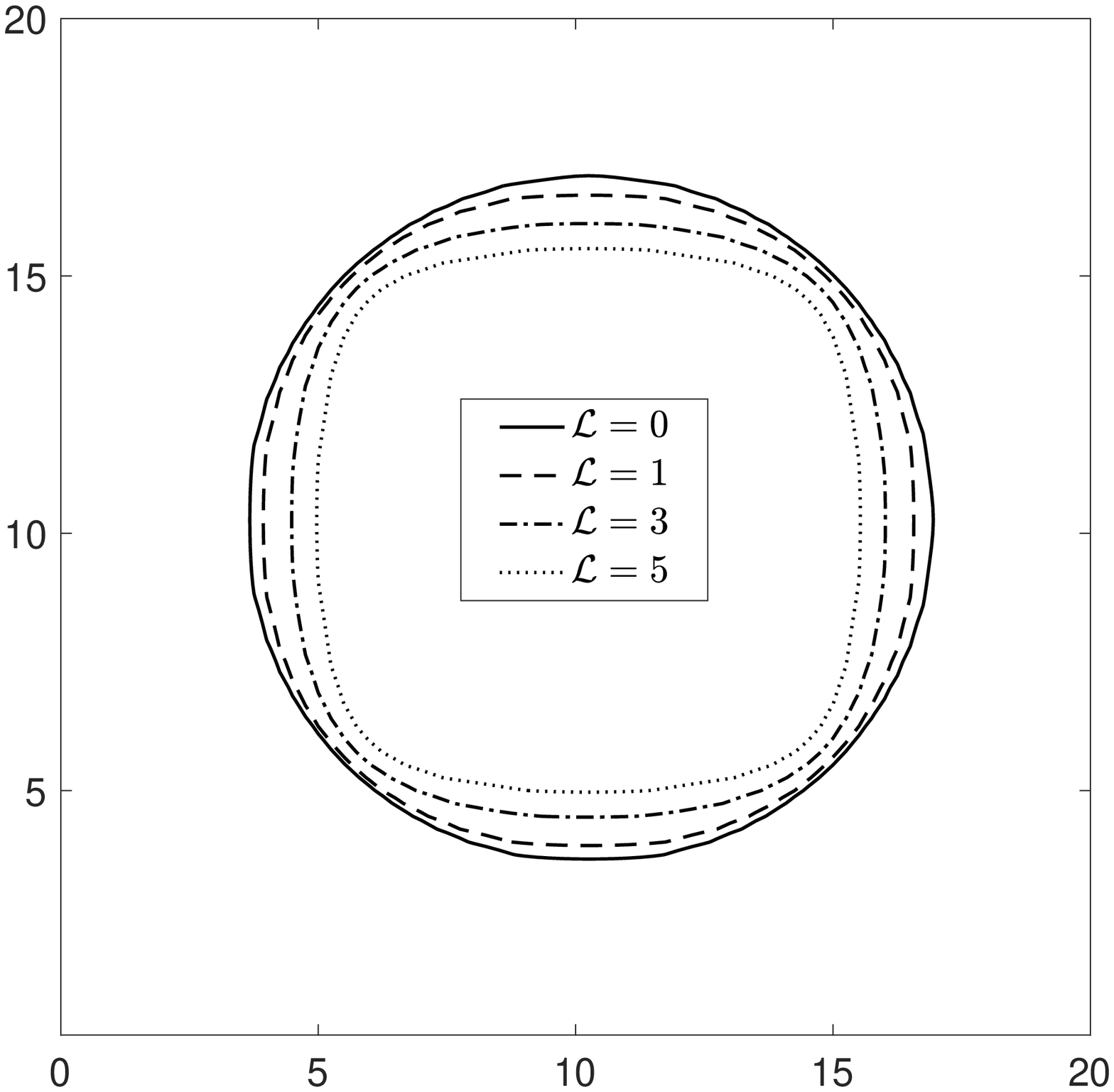}}~
\subfloat[$t=300$]{\includegraphics[width=0.37\textwidth]{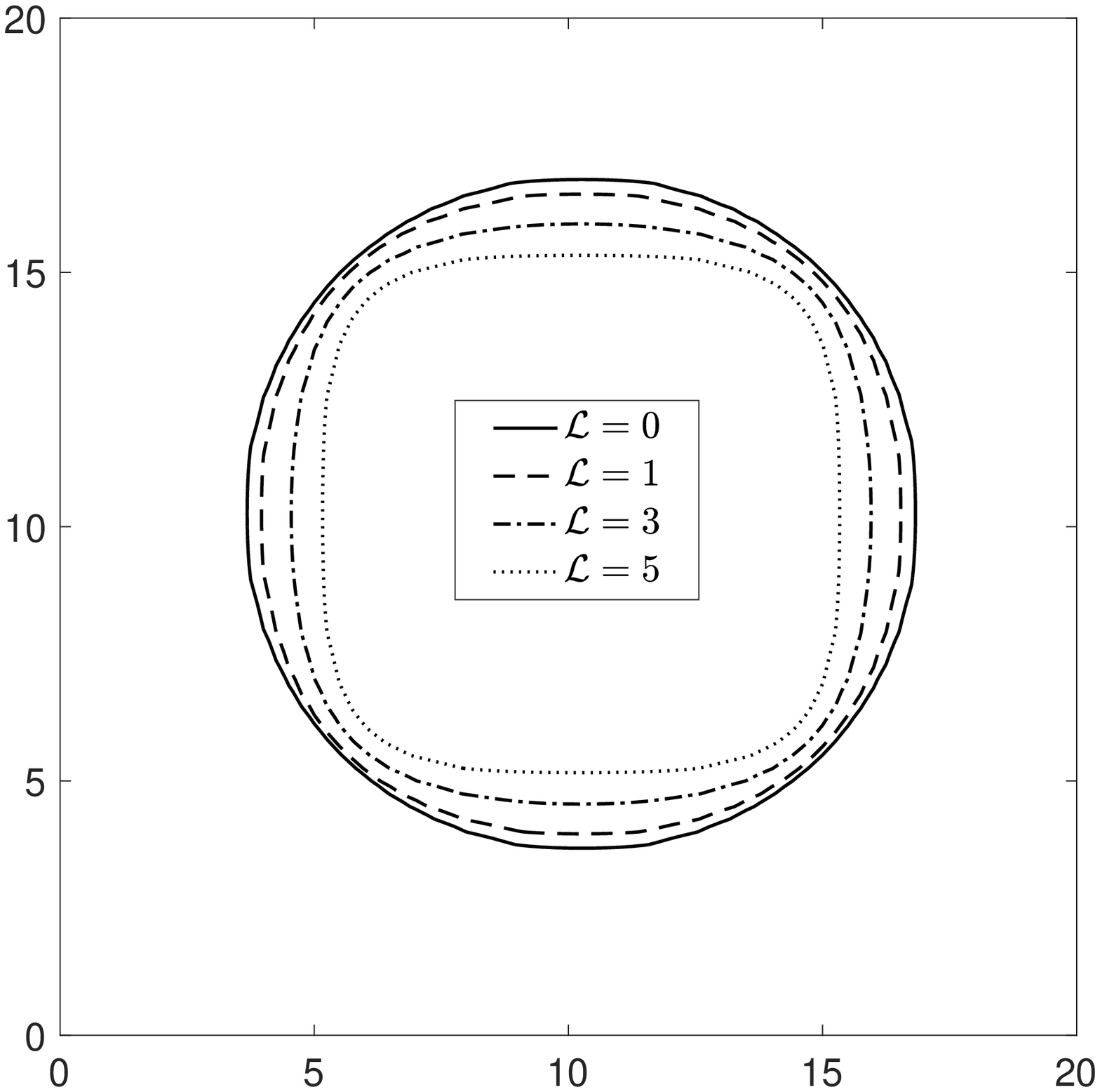}}
\caption{The shape evolutions of a single $\gamma'$ particle with ${\cal L}=0,\,1,\,3,$ and 5. The contour plots for $c=0.22$ at $x=10$ are displayed.}\label{fig:example1}
\end{figure}

\begin{figure}[H]
\centering
\subfloat[${\cal L}=1$]{\includegraphics[width=0.325\textwidth]{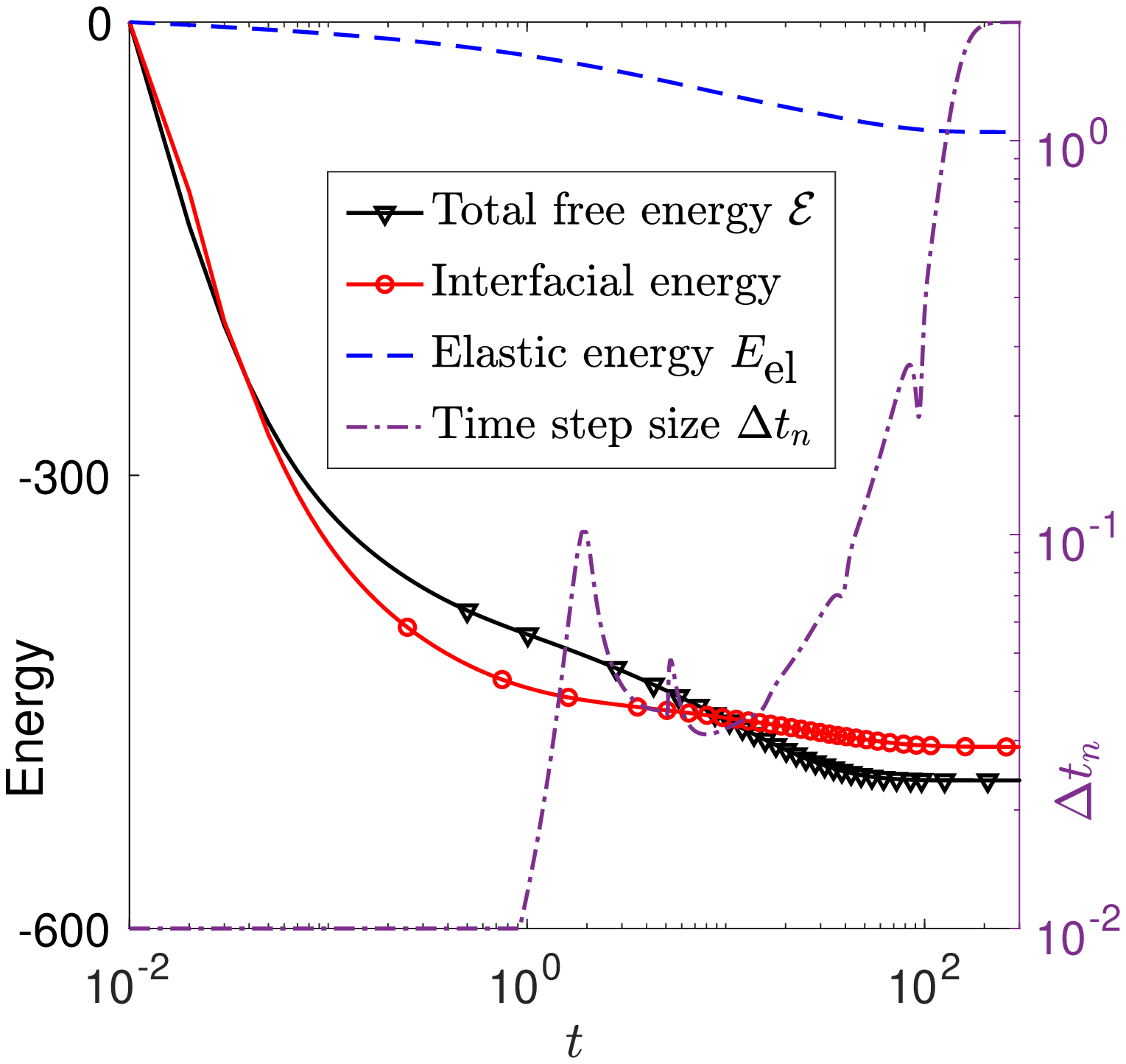}}~
\subfloat[${\cal L}=3$]{\includegraphics[width=0.325\textwidth]{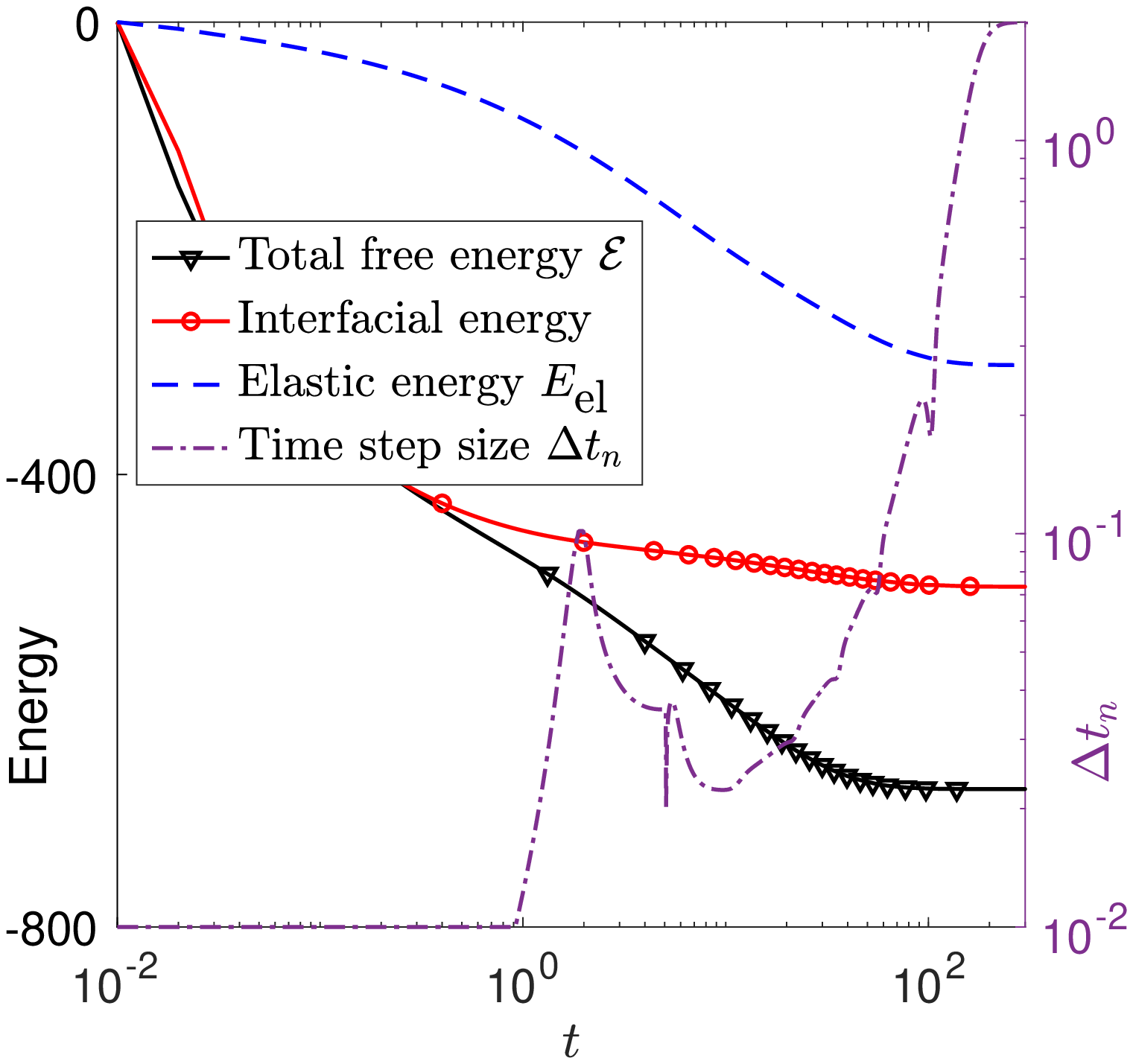}}~
\subfloat[${\cal L}=5$]{\includegraphics[width=0.325\textwidth]{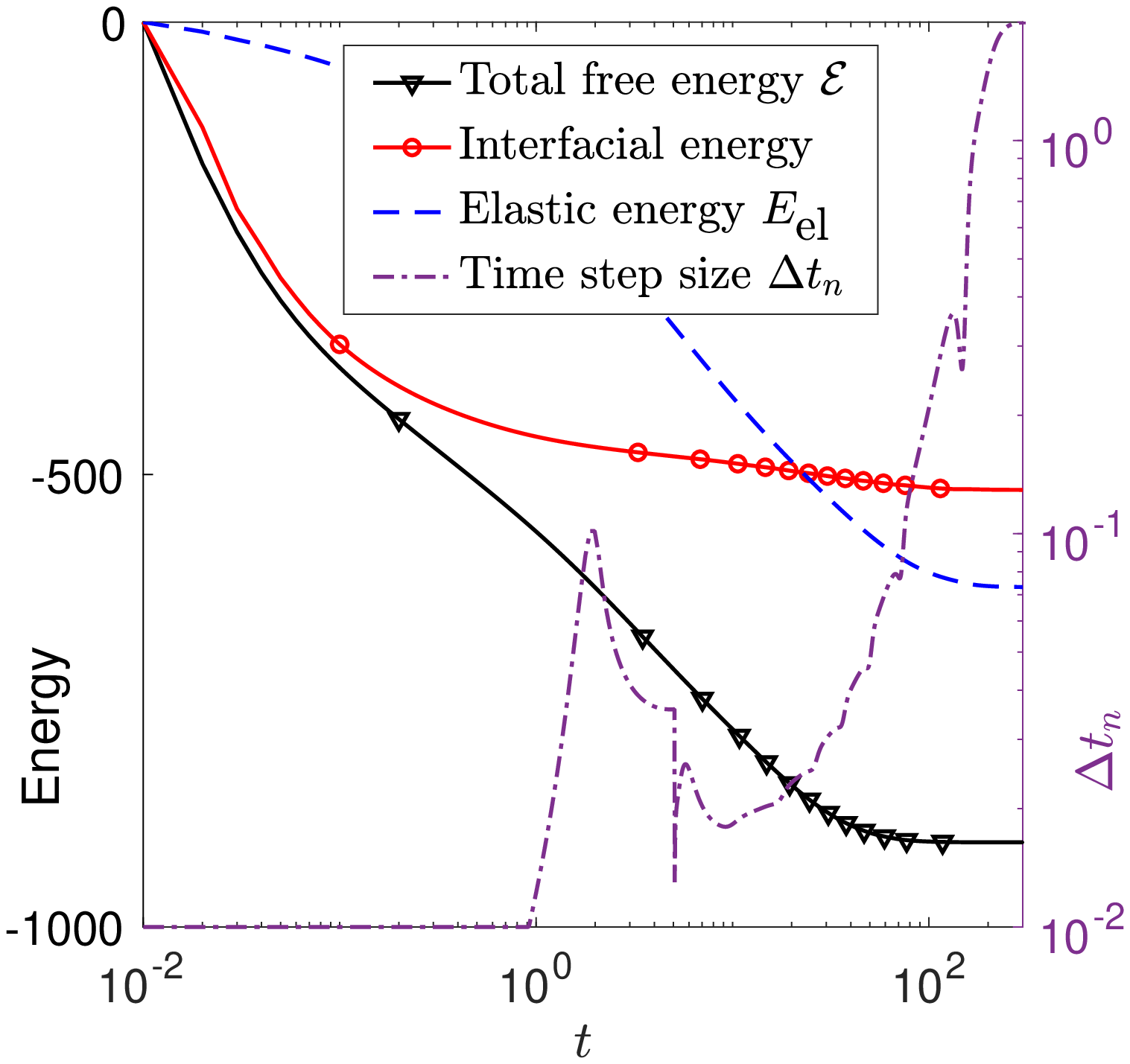}}
\caption{The evolutions of the relative total free energy $\tilde{\cal E}^n-\tilde{\cal E}^1$, the relative interfacial energy, the relative  elastic energy 
(left axis), and the history of the time step size  (right axis).}\label{fig:example1.2}
\end{figure}

\subsection{Coarsening rate constant of a single $\gamma'$ particle}
The precipitation process of the $\gamma'$ phase can be roughly divided into three stages: the nucleation, the growth, and the coarsening.
In this subsection, we focus on the coarsening rate constants of a single $\gamma'$ particle.
According to Lifshitz--Slyozov--Wagner theory \cite{lifshitz1961kinetics}, the increase in the
mean particle size as a function of time will follow a cubic
growth law 
\begin{equation}\label{growth law}
\left<R\right>^3-\left<R_0\right>^3=K(t-t_0),
\end{equation}
where $\left<R\right>$ is the mean particle size, $\left<R_0\right>$ and $t_0$ are the mean particle size and time at the beginning of the coarsening. 
$K$ is known as the coarsening rate constant, which depends on the surface free energy of
the particle-matrix interface and the diffusion coefficient of the solute in the matrix. 
To verify the growth law \eqref{growth law}, we run simulations with different volume fractions of $\gamma'$ phase,
which are $V_f=10\%,$ $17.5\%,$  $25\%,$  $32.5\%,$  and $40\%,$ respectively.
The computational domain is covered by a $60\times 60\times 60$ uniform mesh size. The mesh size and parameters used in the adaptive time stepping strategy are the same as in the previous simulations, except that the initial time step size is set to $\Delta t_{\textnormal{min}}=0.001$.  
Initially, a spherical $\gamma'$ particle with $c=0.234$ is embedded in a matrix with $c=0.147+0.087 V_f$. The particle is located at the center of $\Omega$ with radius $r=1.5$. 

    The relationship between  $R^3$ and $t$ is illustrated in Fig.~\ref{fig:example1.3}-(b).
     Through  data fitting, the dimensionless coarsening rate constants of a single $\gamma'$ particle are 0.24, 0.83, 1.75, 3.45, and 5.25, which are corresponding to $V_f=10\%,$ $17.5\%,$  $25\%,$  $32.5\%,$  and $40\%,$ respectively. 
     It is obvious that the dimensionless coarsening rate constant increases as the volume fraction of $\gamma'$ increasing. 
     The evolutions of the total free energy, the interfacial energy, the elastic energy, and the average composition for the case with $V_f=0.4$ are shown in Fig.~\ref{fig:example1.3}-(a). 
     The results in other cases are very similar and will not be repeated here.
     From Fig.~\ref{fig:example1.3}-(a), we found that the  interfacial energy and the elastic energy increase as the particle gets larger, but the total free keeps decaying due to the energy dissipation law and unconditional energy stability of the newly presented scheme. 
     We also plot the average composition $\bar c$ in Fig.~\ref{fig:example1.3}-(a), which  verifies that  the newly proposed scheme fully complies with the  mass conservation  law.

\begin{figure}[H]
\centering
\subfloat[$V_f=0.4$]{\includegraphics[width=0.45\textwidth]{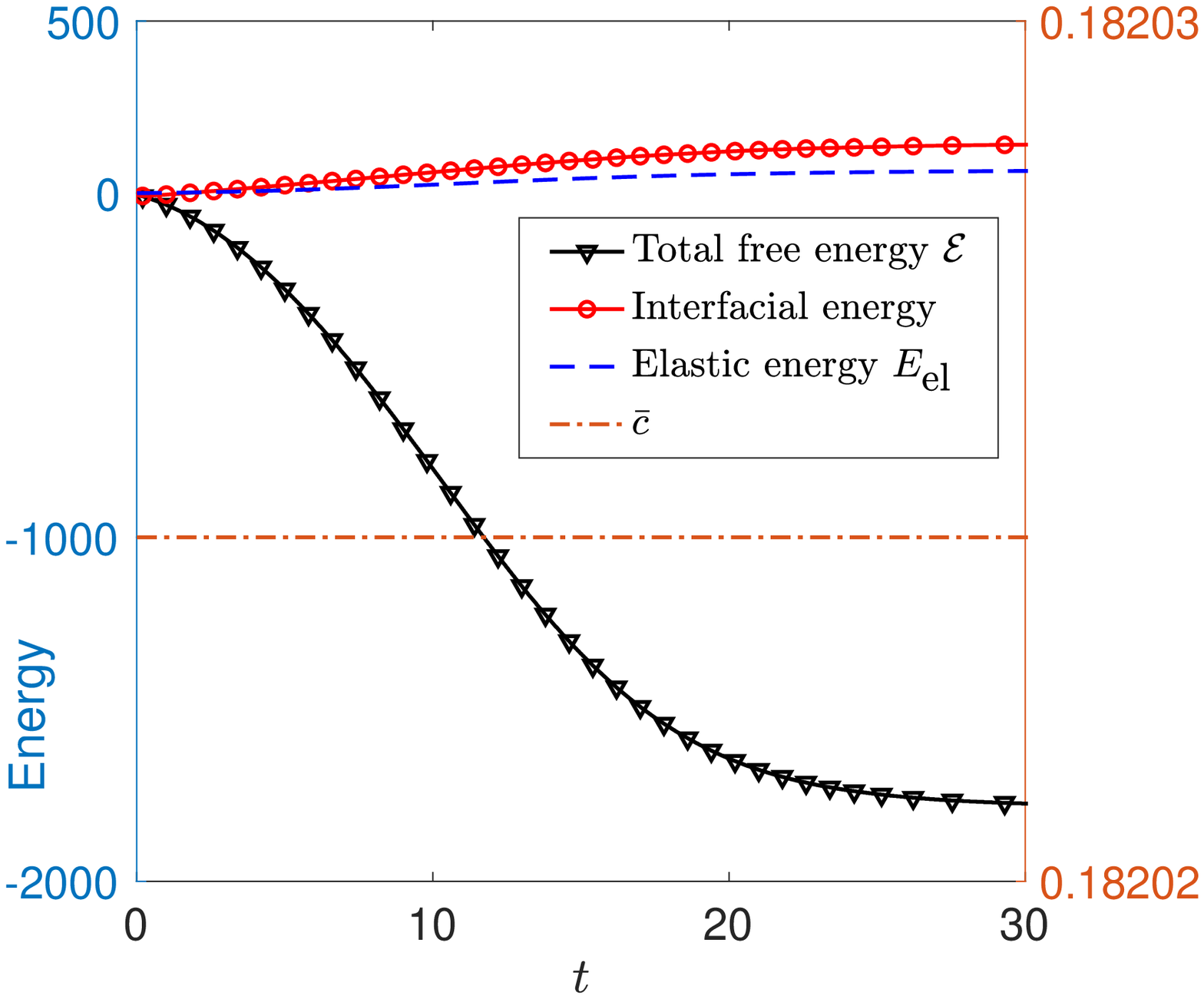}}~
\subfloat[]{\includegraphics[width=0.45\textwidth]{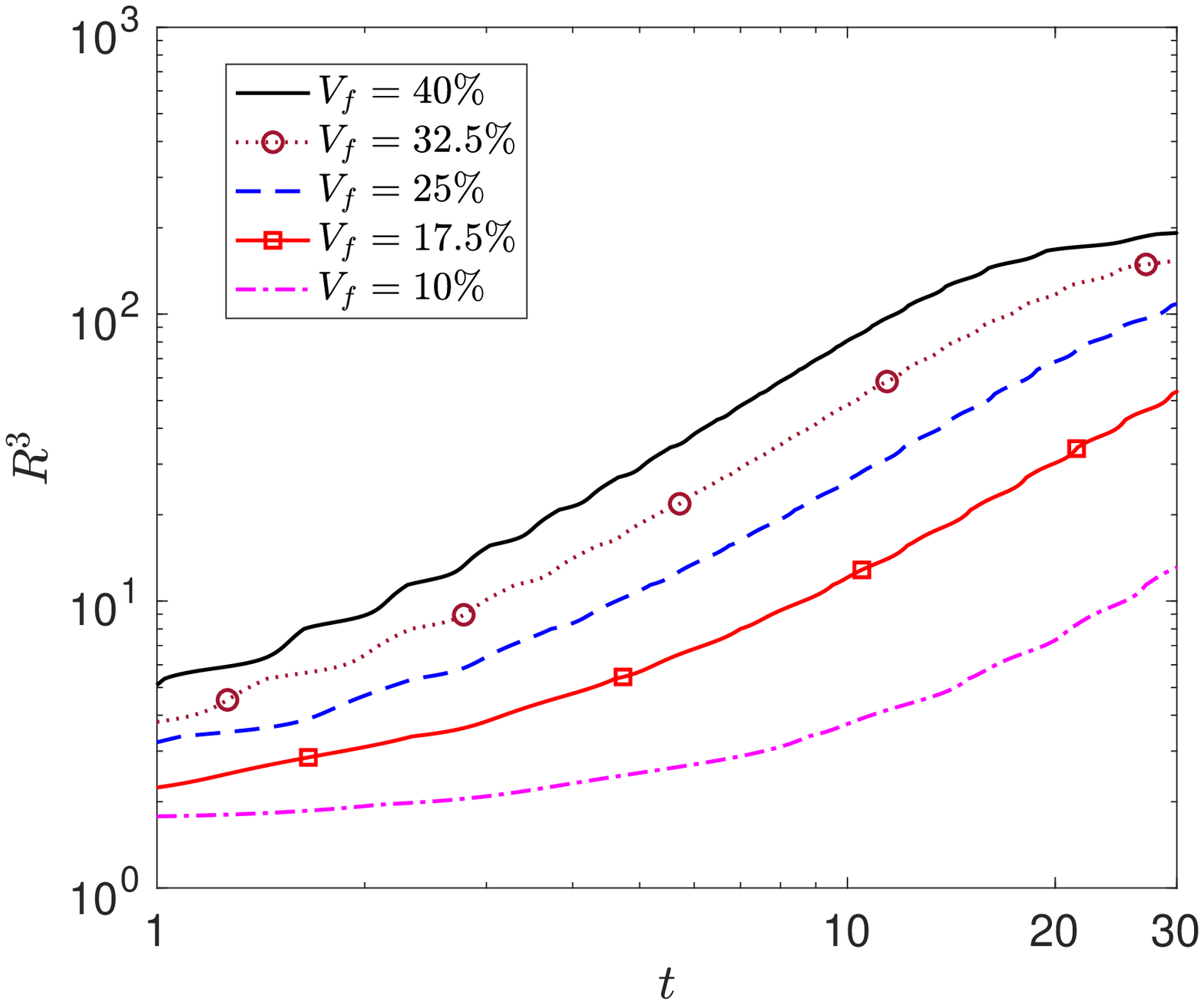}}
\caption{(a) The evolutions of the total free energy, the interfacial energy,  the elastic energy (left axis), and the average composition $\bar c$  (right axis);
(b) Evolutions of  $R^3$ for  different volume fractions.}\label{fig:example1.3}
\end{figure}

\subsection{Morphological evolution of the $\gamma'$ phase and particles number density change with increased volume fraction}
In this subsection, we investigate the morphological evolution and particle number density changes of the $\gamma'$ phase by simulating several tests for different volume fractions. 
The computational domain $\Omega=[0,\,75]\times[0,\,75]\times[0,\times10]$ is covered by a $300\times 300\times 40$ uniform mesh with $h=0.25$.
The time step size is initially set as $\Delta t_1=0.01$ and then adaptively controlled with 
$\zeta =100$, $\Delta t_{\textnormal{min}}=0.01$, and $\Delta t_{\textnormal{max}}=2$.
A homogeneous solution with small composition fluctuations around the average composition is taken as the initial value, which is set to be a randomly distributed state $(c^0,\, \eta^0)=(\bar{c}+\delta_c, 0.1+\delta_\eta)$,  where $\delta_c$ or $\delta_\eta $ is a uniform random distribution function of $-0.05$ to 0.05.
 The value of the average composition $\bar {c}$ 
is determined by the volume fraction of the $\gamma'$ phase. For the reason of comparison, we run four simulations with $\bar{c}\approx  0.1622$, 0.1687, 0.1753, and 0.1818, which are corresponding to the volume fraction $V_f=17.5\%$, $25\%$, $32.5\%$, and $40\%$, respectively. All simulations are stopped at $t=200$. 

The distributions of the $\gamma'$ phase at $t=200$ for the four simulations are given in Fig.~\ref{fig:example1.4}, from which we found a remarkable morphology change of $\gamma'$ phase with increased volume fraction. The particles size, shape, and number density change as the volume fraction of the $\gamma'$ phase increases.
Next, we take the simulation with $V_f=17.5\%$ as an example to study the morphological evolution and particle number density of the $\gamma'$ phase.
The pseudo-color plots of the composition $c$ at $t=12,\,39$, and $171$ are displayed in Fig.~\ref{fig:example1.5} (a)-(c), which are corresponding to three stages: the nucleation, the growth, and the coarsening, respectively.
The nucleation of new $\gamma'$ phase forms randomly due to small composition fluctuations.  The growth of these nuclei is caused by diffusive  transportation, and the coarsening involves the growth of large particles at the expense of the dissolution of small ones, driven by an overall reduction in the interfacial energy.
Contour plots of the third component of displacement, $u_3$,  on surface $z=0$ are also shown in  Fig.~\ref{fig:example1.5} (d)-(f), from which we conclude that the elastic interaction is long range and can lead to the spatial correlation between the $\gamma'$ particles.

\begin{figure}
\centering
{\includegraphics[width=0.07\textwidth]{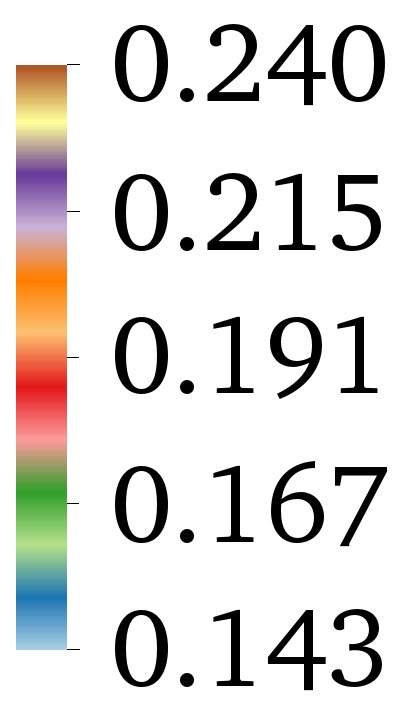}}~
\subfloat[$V_f=17.5\%$]{\includegraphics[width=0.22\textwidth]{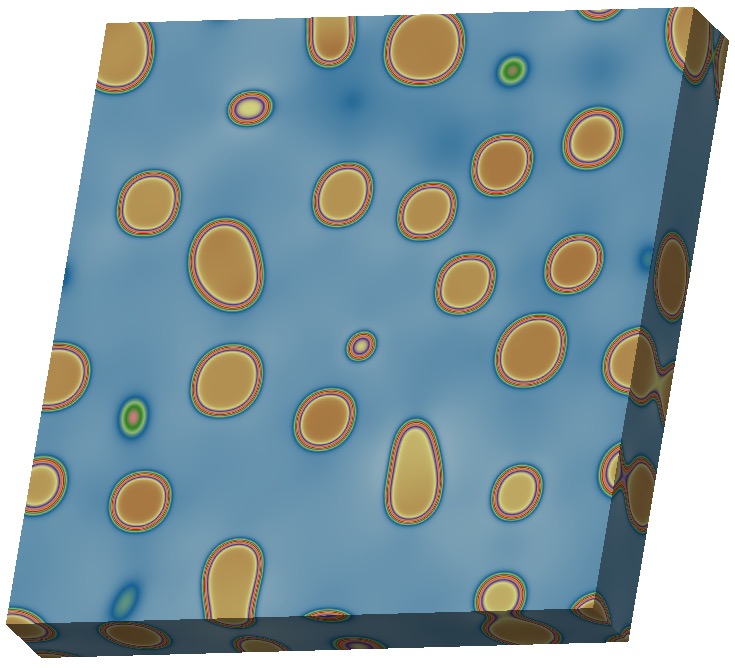}}~
\subfloat[$V_f=25\%$]{\includegraphics[width=0.22\textwidth]{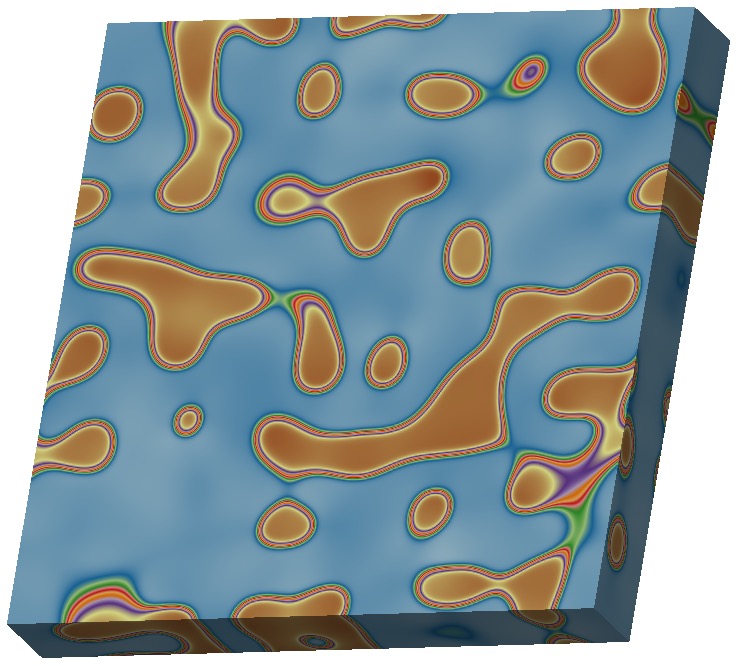}}~
\subfloat[$V_f=32.5\%$]{\includegraphics[width=0.22\textwidth]{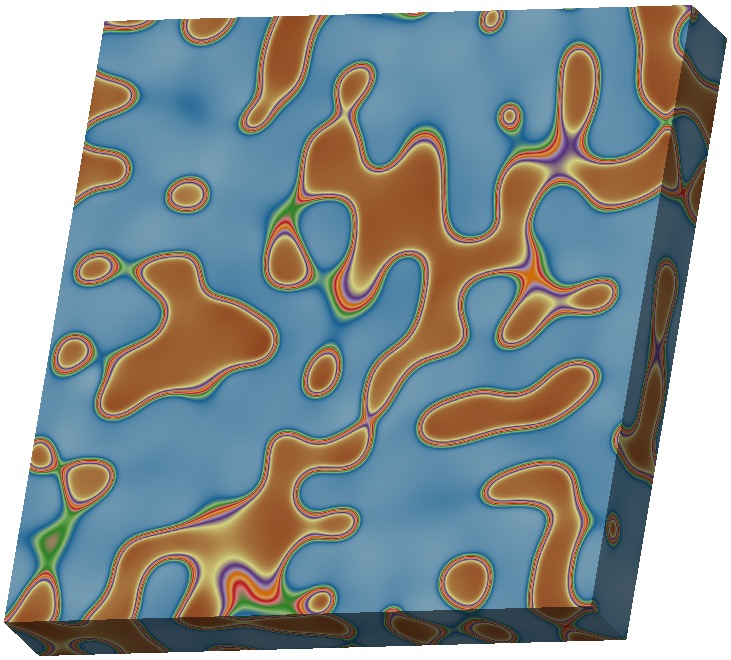}}~
\subfloat[$V_f=40\%$]{\includegraphics[width=0.22\textwidth]{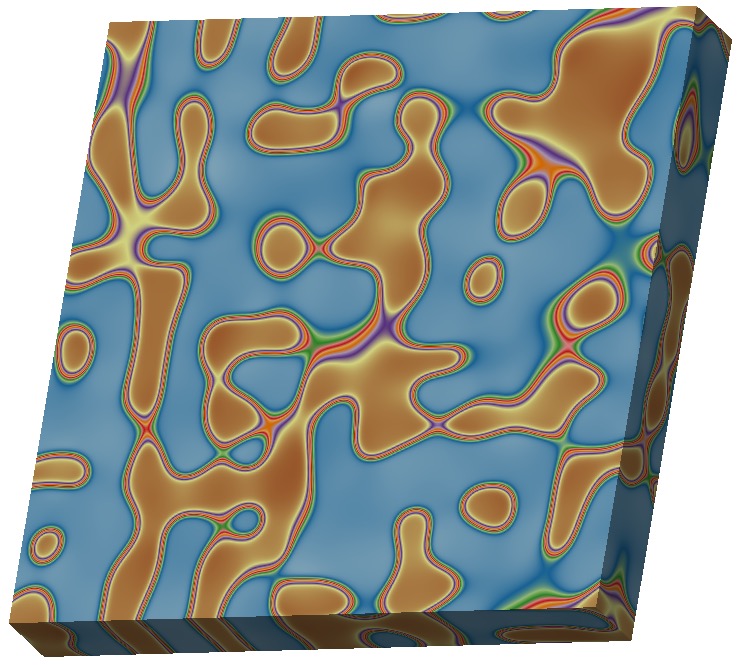}}~\\
{\includegraphics[width=0.07\textwidth]{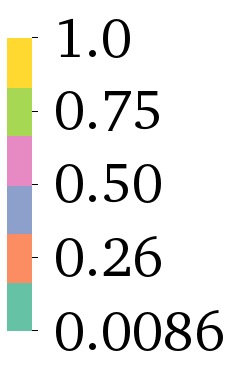}}~
\subfloat[$V_f=17.5\%$]{\includegraphics[width=0.22\textwidth]{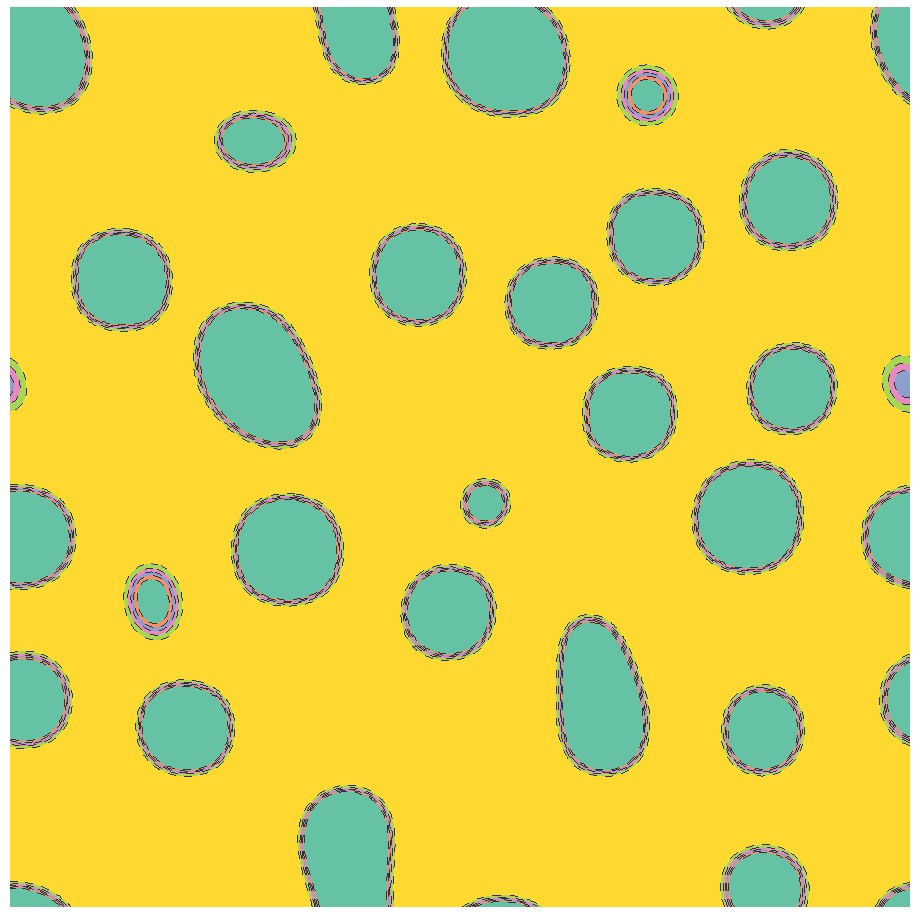}}~
\subfloat[$V_f=25\%$]{\includegraphics[width=0.22\textwidth]{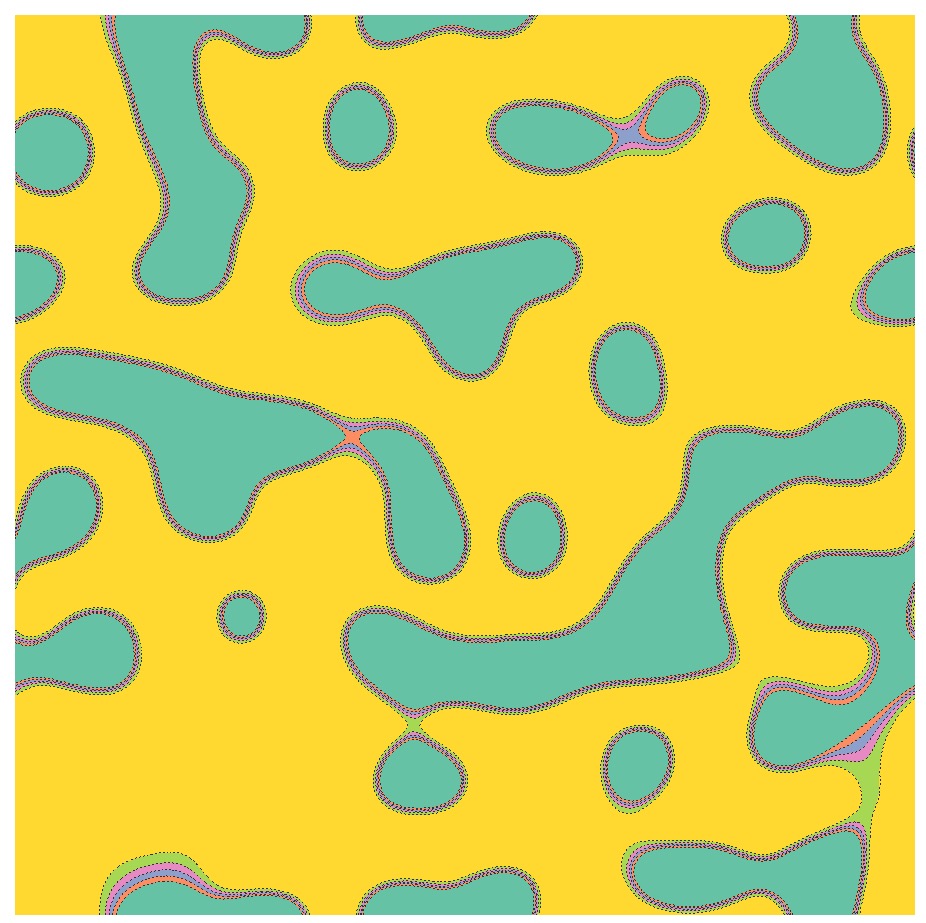}}~
\subfloat[$V_f=32.5\%$]{\includegraphics[width=0.22\textwidth]{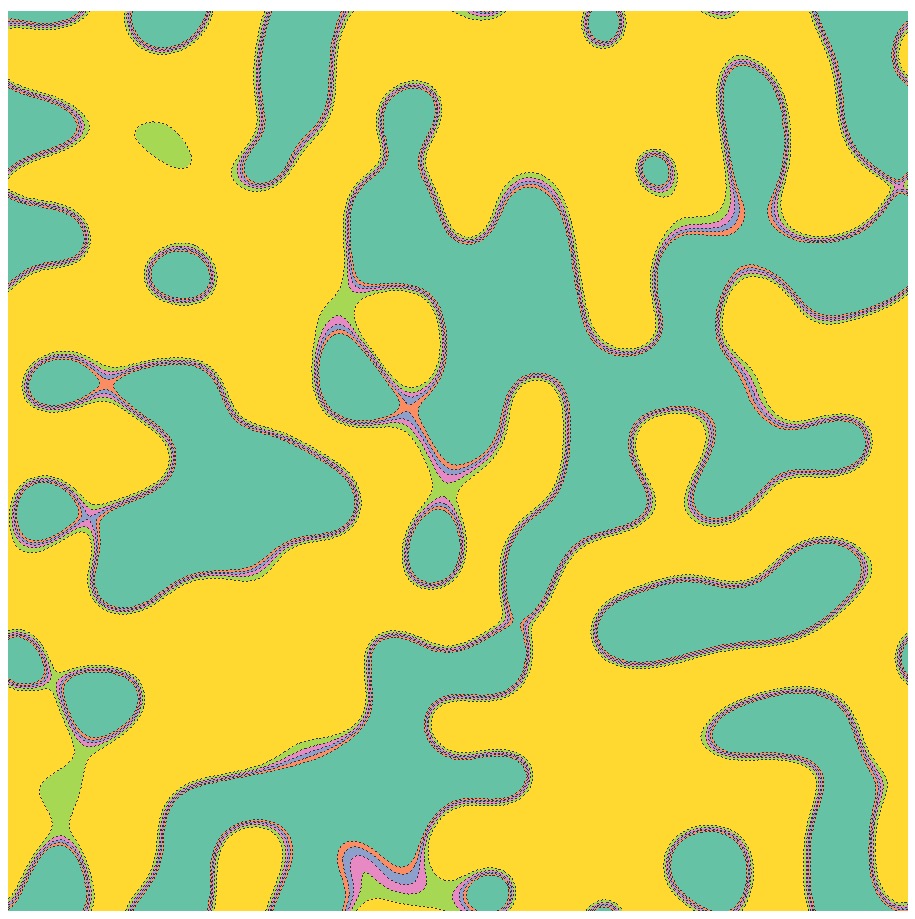}}~
\subfloat[$V_f=40\%$]{\includegraphics[width=0.22\textwidth]{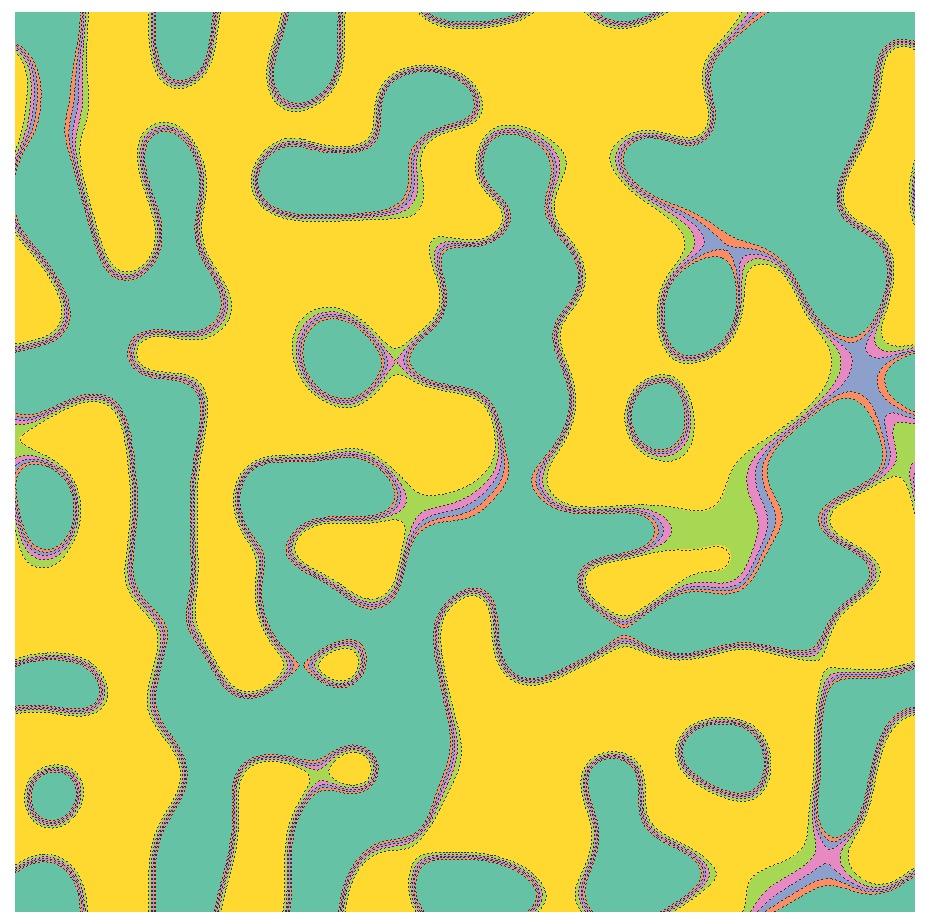}}~
\caption{(a)-(d): Pseudo-color plots of composition $c$  at $t=200$ with different  volume fractions of the $\gamma'$ phase.
(e)-(h): Contour plots of long range order parameter $\eta$ on surface  $z=0$ at $t=200$.}\label{fig:example1.4}
\end{figure}

\begin{figure}
\centering
{\includegraphics[width=0.07\textwidth]{ccolorbar.jpg}}~
\subfloat[$t=12$, nucleation]{\includegraphics[width=0.3\textwidth]{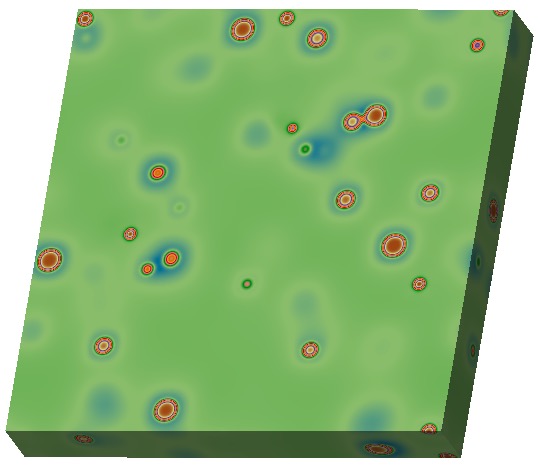}}~
\subfloat[$t=39$, growth]{\includegraphics[width=0.3\textwidth]{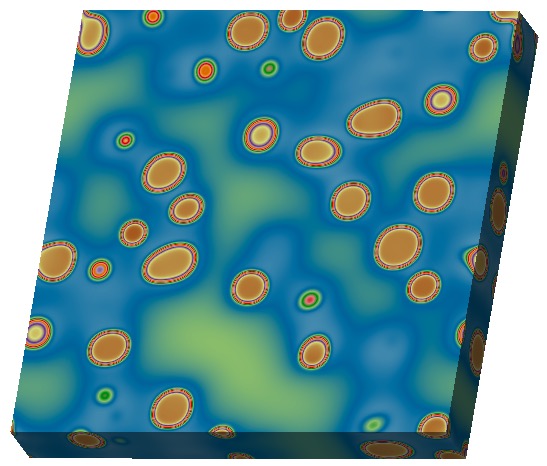}}~
\subfloat[$t=171$, coarsening]{\includegraphics[width=0.3\textwidth]{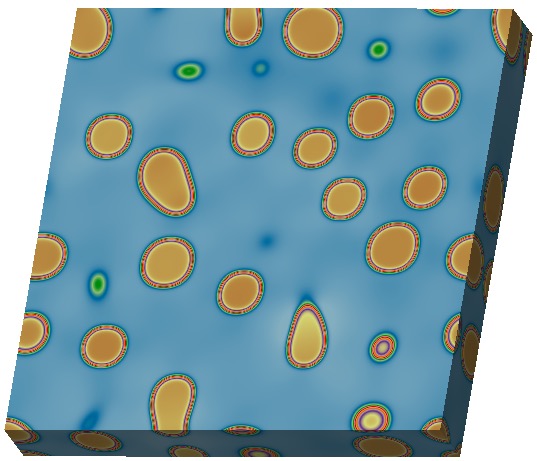}}~\\
{\includegraphics[width=0.07\textwidth]{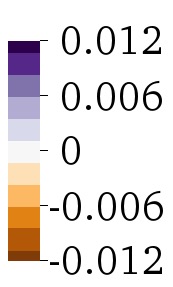}}~
\subfloat[$t=12$]{\includegraphics[width=0.3\textwidth]{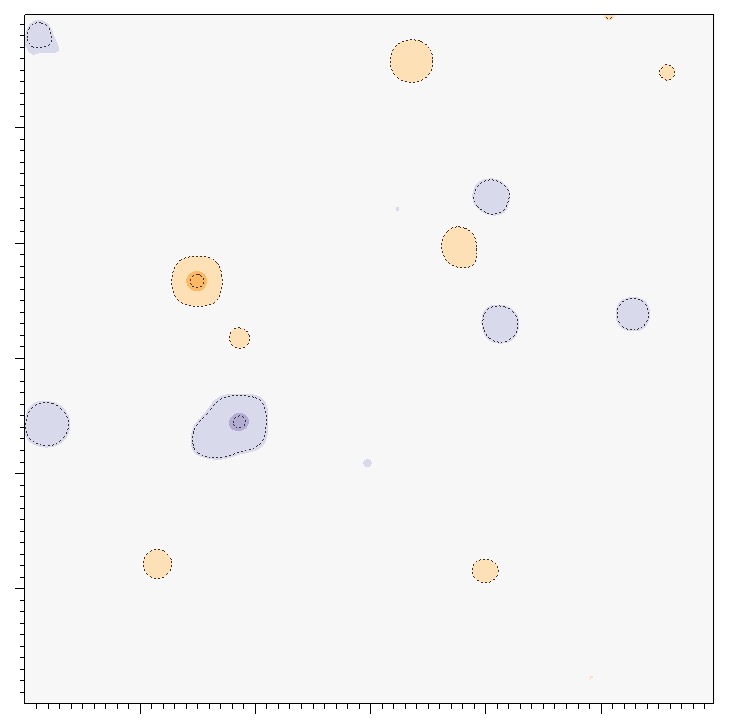}}~
\subfloat[$t=39$]{\includegraphics[width=0.3\textwidth]{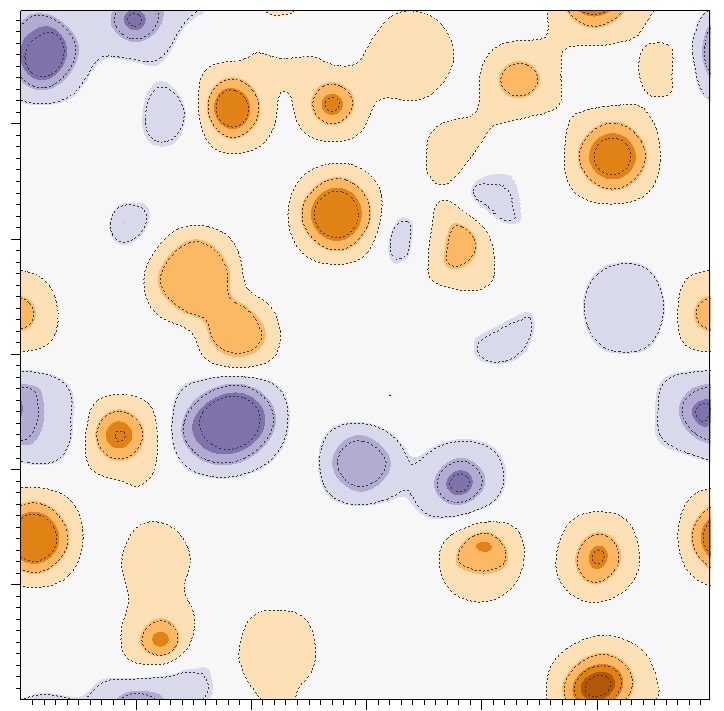}}~
\subfloat[$t=171$]{\includegraphics[width=0.3\textwidth]{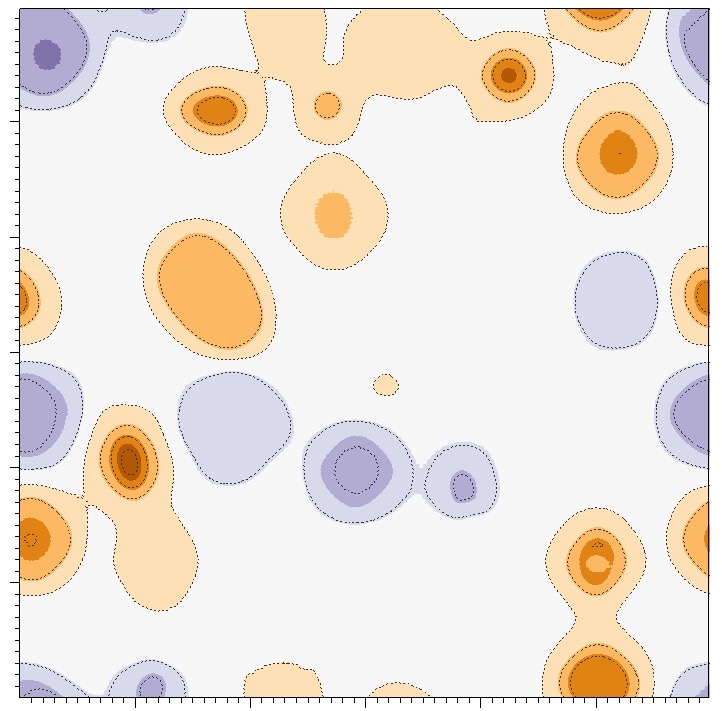}}~
\caption{Volume fraction $V_f=17.5\%$. (a)-(c): Pseudo-color plots of composition $c$  at  different  times.
(d)-(f): Contour plots of the third component of displacement $u_3$   on surface $z=0$ at  different  times.}\label{fig:example1.5}
\end{figure}

The evolutions of the total free energy, the interfacial energy, and the elastic energy  for the simulation with $V_f=17.5\%$ are shown in Fig.~\ref{fig:example1.6}-(a).
During the nucleation stage, the total free energy decreases rapidly as  the interfacial energy and the elastic energy increase. 
At the growth stage, the  total  energy decay rate becomes smaller as the interfacial energy and the elastic energy increase rapidly.
During the coarsening stage, the  total  energy decays continuously with the shape evolution of $\gamma'$ particles, which is mainly determined by the balance between the interfacial energy and the elastic energy.
We also plot the average composition $\bar c$ in Fig.~\ref{fig:example1.6}-(a), which clearly verifies that  the newly proposed scheme fully complies with the  mass conservation  law.
The evolution of  $\left<R^3\right>$ is given in Fig.~\ref{fig:example1.6}-(b), which verifies  the cubic growth law \eqref{growth law}. 
During the growth stage and early  coarsening stage, the slope $K$ in the cubic growth law \eqref{growth law} is near to 0.83, which is the coarsening rate constant of a single $\gamma'$ particle for volume fraction $V_f=17.5\%$. 
As coarsening proceeds, the coarsening rate constant decreases and eventually approaches zero when a steady state is reached. 
The number of $\gamma'$ particles as a function of time $t$ is also plotted in Fig.~\ref{fig:example1.6}-(b), from which we found that the particle number decreases rapidly during the growth stage. 
The temporal evolution of the normalized particle size distribution (PSD) of $R/\left<R\right>$ for the $\gamma'$ phase are shown in Fig.~\ref{fig:example1.7}, which shows a good agreement with the Lifshitz--Slyozov--Wagner  (LSW) theory \cite{lifshitz1961kinetics, wagner1961theory} and the experiment results reported in \cite{ardell1966coarsening}. 

\begin{figure}
\centering
\subfloat[]{\includegraphics[width=0.45\textwidth]{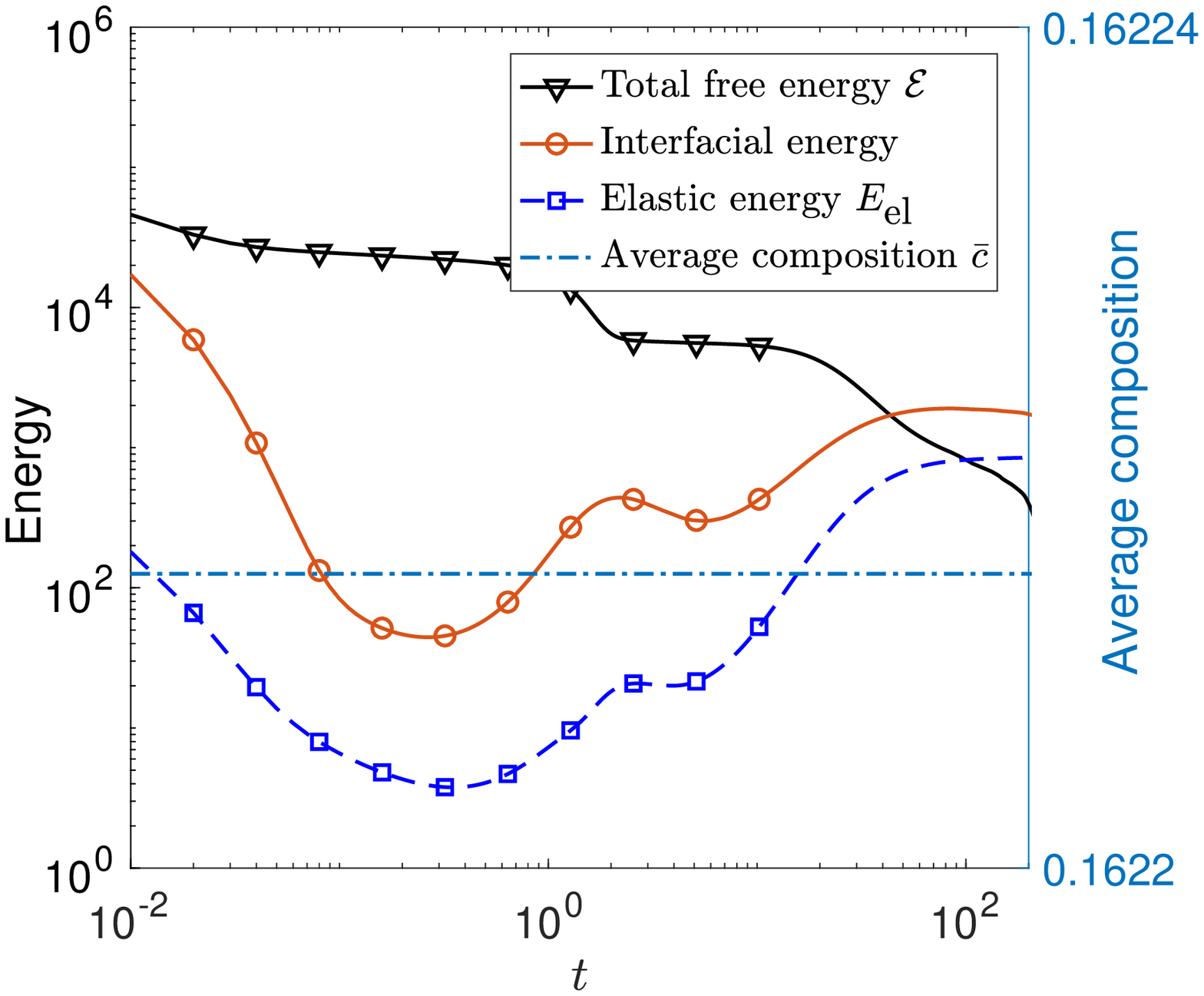}}~
\subfloat[]{\includegraphics[width=0.45\textwidth]{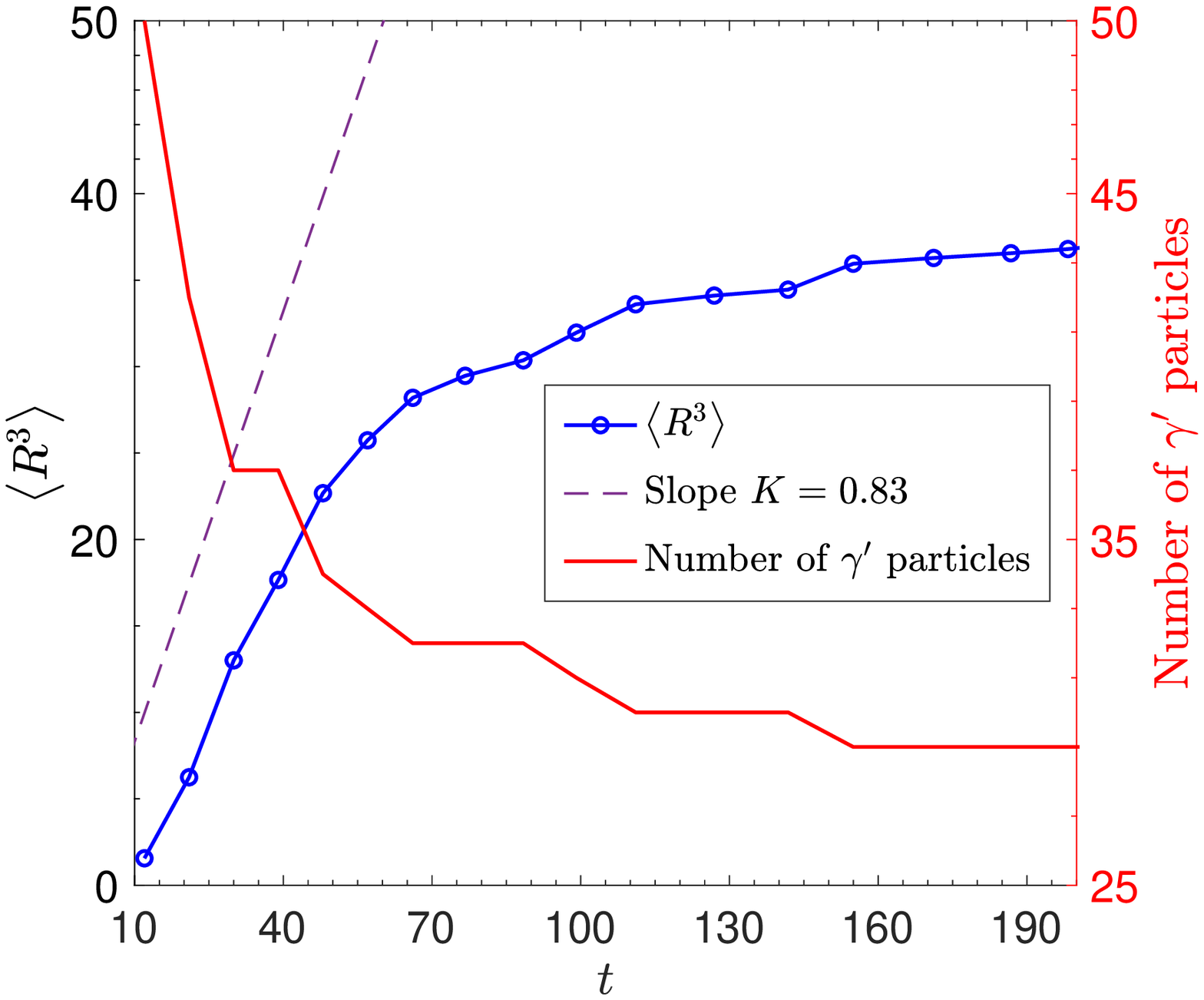}}
\caption{Volume fraction $V_f=17.5\%$. (a) The evolutions of the total free energy, the interfacial energy, the elastic energy (left axis), and the average composition $\bar c$  (right axis).
(b) The evolutions of  $\left<R^3\right>$ and number of $\gamma'$ particles. For visualization convenience, the total free energy is shifted by subtracting $(\tilde{\cal E}^1-4.6\times 10^4)$. }\label{fig:example1.6}
\end{figure}

\begin{figure}
\centering
\subfloat[$t=66$]{\includegraphics[width=0.24\textwidth]{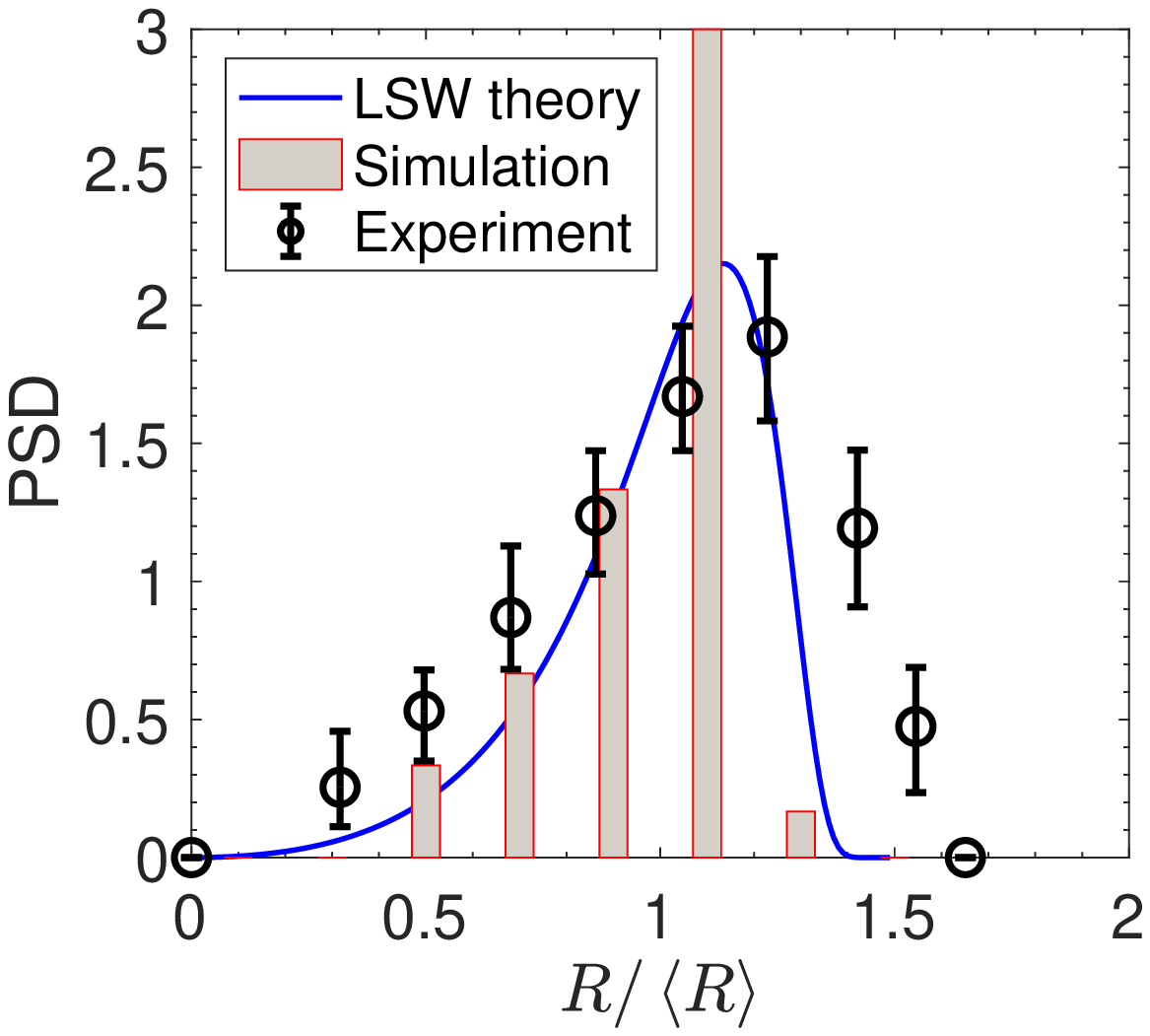}}~
\subfloat[$t=127$]{\includegraphics[width=0.24\textwidth]{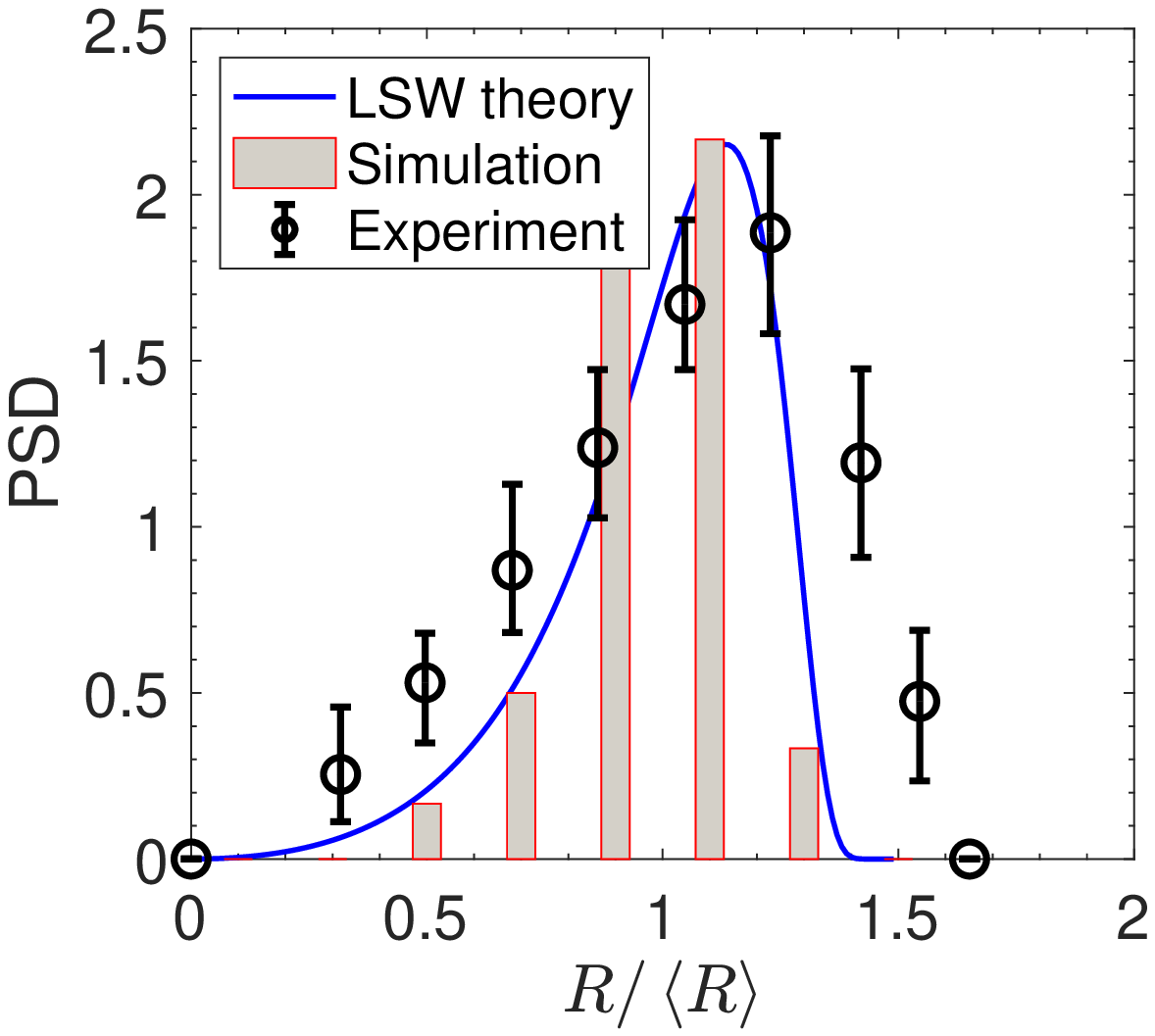}}~
\subfloat[$t=171$]{\includegraphics[width=0.24\textwidth]{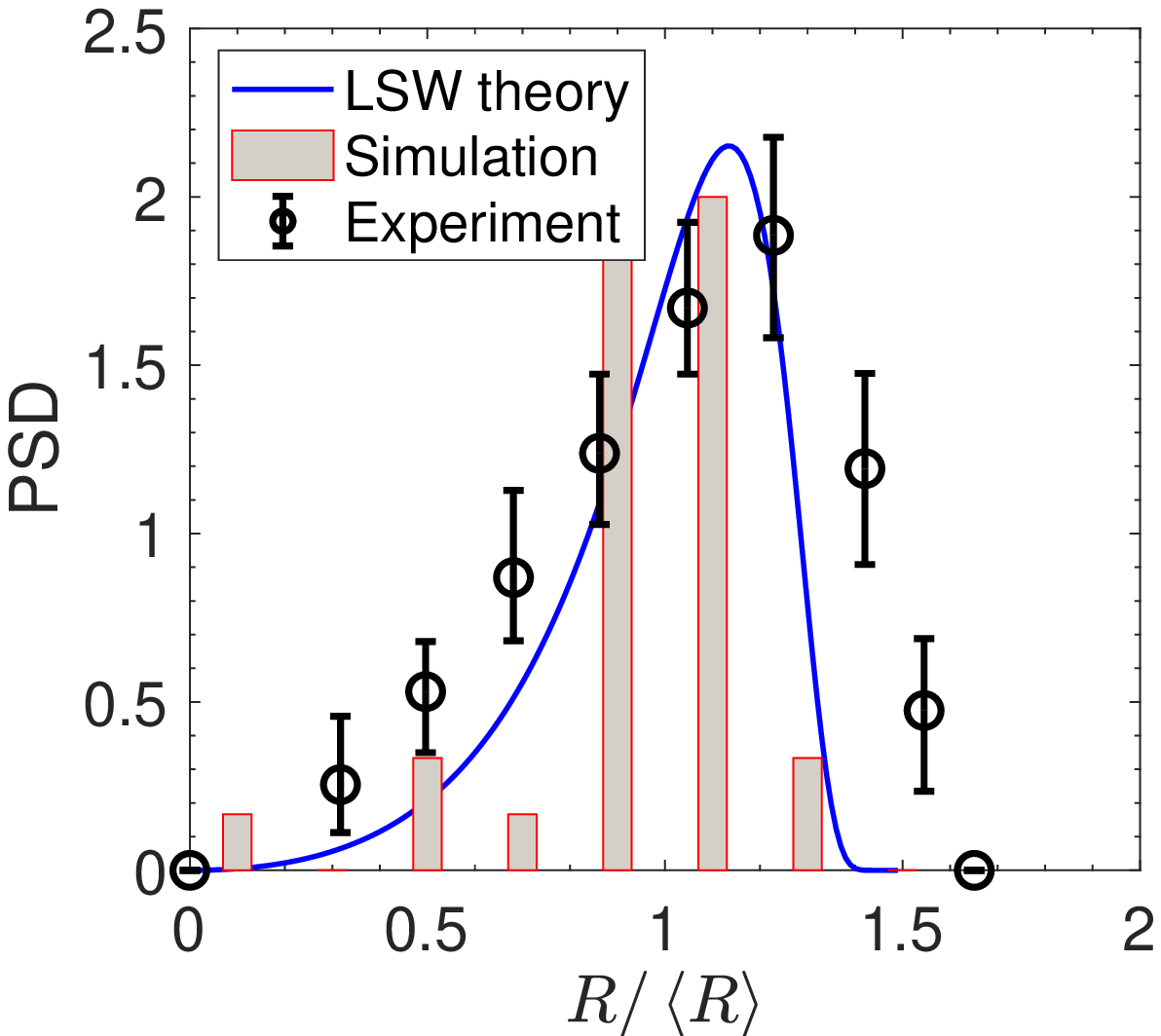}}~
\subfloat[$t=200$]{\includegraphics[width=0.24\textwidth]{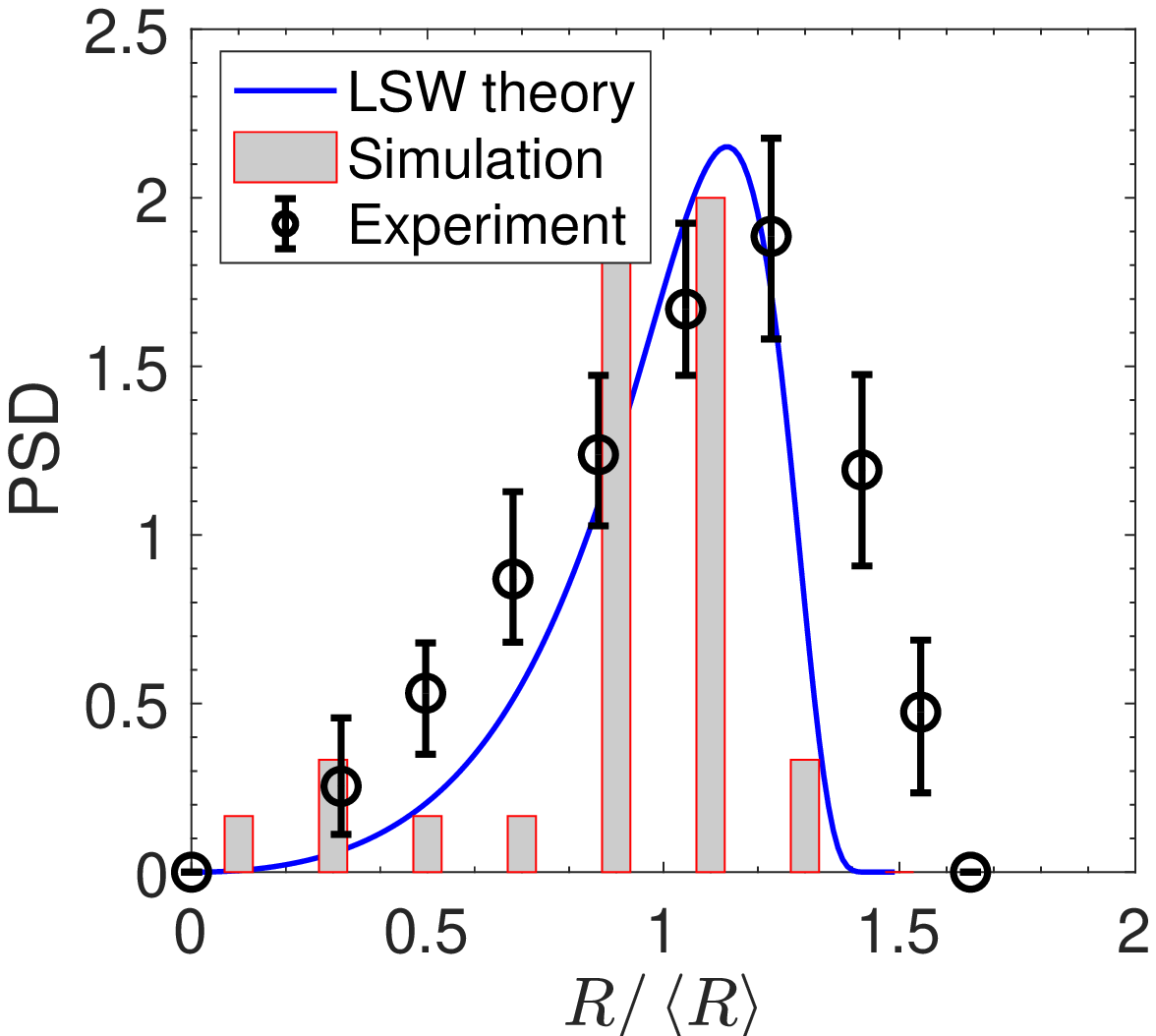}}~
\caption{Volume fraction $V_f=17.5\%$. The temporal evolutions of the normalized PSD of $R/\left<R\right>$ for the $\gamma'$ phase.}\label{fig:example1.7}
\end{figure}

\subsection{Parallel scalability tests}
In this subsection, we study the parallel scalability, in both the weak and strong sense, of the proposed method. 
To begin, we discuss several issues including the influence of different 
overlaps, subdomain solvers and so on. In all simulations, we run the code for the first $10$ time steps
with a fixed  time step size $\Delta t =0.01$ and an $80 \times 80 \times 80$ mesh by using $256$ processor cores. 
To check the influence of the subdomain solvers, we limit the 
test to the classical AS preconditioner and fix the overlapping size to $2\delta=2$. 
The ILU factorizations with $0$, $1$, and $2$ levels of fill-in 
and LU factorization are considered. 
The number of Newton iterations and the averaged number of GMRES iterations together with the total compute times are provided in Table~\ref{tab:subdomain}.
From Table~\ref{tab:subdomain}, we can see the number of Newton iterations is invariable, which means 
the number of Newton iterations is insensitive to the subdomain solver.
In addition, the number of GMRES iterations decreases by increasing the fill-in level, 
but the total compute time keeps growing due to the increased cost of the subdomain 
solver. In summary, we find that the optimal choice in terms of the total compute time is the ILU(0) 
subdomain solver and the reuse strategy can further save nearly 20-30\% of the compute time.

\begin{table}[!htb]
\caption{
Performance of the NKS algorithm with different subdomain solvers.
}
\label{tab:subdomain} 
\centering
\begin{tabular}{|c|ccccc|}
\hline
{Subdomain solver} &ILU(0) &ILU(1) &ILU(2) &LU &ILU(0)-reuse \\
\hline
  Total Newton          					&25                &24                    &25                    &25                     &25  \\
                                     GMRES/Newton      &23.9          &14.5                  &14.9                   &13.5              &24 \\
                                     Total Time (s)           &126.3        &881.4              & 3,815.5           &    8,325.2      &95.1\\
\hline
\end{tabular}
\end{table}

Based on the above tests, we take 
the ILU(0)-reuse as the subdomain solver
and investigate the performance of the NKS solver by changing the type of the AS 
preconditioner and the overlapping factor $\delta$. 
The classical-AS, the left-RAS, and the right-RAS preconditioners with overlapping size $2\delta=0,\, 2,\, 4$ 
 are considered. 
The number of Newton iterations, the averaged number of GMRES iterations, and the total compute 
times are listed in Table \ref{tab:overlap}. From Table \ref{tab:overlap}, we can see that the 
number of Newton iterations is insensitive to the type of the AS preconditioner and the overlapping 
size. We also conclude that the classical-AS preconditioner with $\delta =0$ is superior to the
other AS preconditioners with $\delta =1$ or $2$
  based on the averaged number of GMRES iterations and the compute times.

\begin{table*}[!hbt]
\caption{\upshape
Performance of NKS solver with respect to the types of preconditioner and overlapping sizes. }
\label{tab:overlap} 
\centering
\begin{tabular}{|c|ccc|cc|cc|}
\hline
{Type of preconditioner}
&\multicolumn{3}{|c|}{classical-AS}
&\multicolumn{2}{|c|}{left-RAS}
&\multicolumn{2}{|c|}{right-RAS}\\
\hline
{$\delta$}  &$0$ &$1$ &$2$ &$1$ &$2$   &$1$ &$2$   \\
\hline
\hline
 Total Newton          &26                &25                 &25              &25             &25               &25         &25      \\
GMRES/Newton      &26.2            &24          &19.4           &14.9             &15.9           &14.8         &17.0     \\
Total Time (s)           &69.1         &95.1           &127.0         &79.1         &121.4        &78.7   &192.4 \\
\hline
\end{tabular}
\end{table*}

Then, we focus on parallel scalability tests to examine the weak and strong scaling performance.
Based on the above observations,   we use the classical-AS 
preconditioner with the overlapping $\delta=0$ and employ the ILU(0) factorization 
with the reuse strategy as the subdomain solver in all simulations.
In the weak scalability tests,  a subdomain with 
a fixed number of mesh points  is handled by each processor core. The weak scaling performance is testified by 
increasing the number of the subdomains and the processor cores simultaneously. 
The number of processors increases from $384$ to $3,072$, and the number of physical grid 
points increases 
from $100^3$ to $200^3$. 
 The numbers of Newton and GMRES iterations together 
with the total compute times are provided in Table \ref{tab:ws2d}, which shows
 that  the averaged number of  GMRES iterations and the total 
compute time increase slowly as more processors are used. 
The good weak scalability of our method is validated by the simulation.  
To study the strong scalability of our method, we run the tests with a $250^3$ mesh.
The numbers of nonlinear and linear iterations are reported in Table~\ref{tab:scalability3d}. 
We notice that the number of nonlinear iterations is almost unchanged  
and the average number of linear iterations increases slightly as the number of used processor core increases. 
The total compute time decreases nearly half as the number of processor cores doubles. 
The overall speedup from $384$ to $6,144$ cores is around $6.79$, 
which indicates a good strong parallel efficiency of the proposed algorithm. 

\begin{table}[!htb]
\caption{\upshape
The weak scalability of the proposed method. Here ``NP" denotes the number of processor cores.
}
\label{tab:ws2d} 
\centering
\begin{tabular}{|c|cccc|}
\hline
Mesh size & $100^3$ & $2\times100^3$ & $4\times100^3$ & $200^3$ \\
                                      NP  &384 & 768 & 1,536 & 3,072\\
\hline
 Total Newton                   &26               &26              &26               &26  \\
 GMRES/Newton             &28.0            &37.5             &42.8                &42.1 \\
 Total Time (s)                  &99.8      &115.8       &122.4      &124.4 \\
\hline
\end{tabular}
\end{table}

\begin{table}[!htb]
\caption{
Performance of the NKS solver with different numbers of processor cores. Here ``NP" denotes the number of processor cores.
}
\label{tab:scalability3d} 
\centering
\begin{tabular}{|c|ccccc|}
\hline
 NP & 384&768	 &1,536&3,072 & 6,144  \\
\hline
Total Newton        &25      		 &25        &25        &26      & 26       \\
GMRES/Newton  &40.1		 &39.6     &40.6       &  45.7      &  48.8   \\
Total Time (s) & 	1251.2	 &634.2	&423.3		&236.4& 184.4		\\
Speed up & 	1	 &1.97		&2.96		&5.29 &	6.79	\\
\hline
\noalign{\smallskip}
\end{tabular}
\end{table}

\section{Conclusion}
The Ni-based alloys phase field system consists of CH/GL equations and linear elastic equations, in which the total free energy includes the elastic energy and logarithmic type functionals.
By using the DVD method, we  propose a semi-implicit finite difference scheme, which solves the challenge posed by the special free energy functional. 
The newly presented semi-implicit scheme is proved to be unconditionally energy stable and enjoys the energy dissipative law and mass conservation law. 
Thanks to the  unconditional energy stability of the scheme, based on the desired solution accuracy and the dynamic features of the nucleation, growth, and coarsening procedures, the time step size is adaptively selected  by using an adaptive time stepping strategy. 
At each time step, a large sparse nonlinear algebraic system is constructed by the semi-implicit method.
We introduce a parallel NKS algorithm to efficiently solve the nonlinear system. 
Several three dimensional test cases are used to validate the energy stability of the semi-implicit scheme and study  the morphological evolution of the $\gamma'$ phase through the heat treatment process.
Large scale numerical experiments show that the proposed algorithm can achieve high scalability with up to six thousand processor cores in both strong and weak senses.

\section*{Appendix A}\label{Appendix-A}
In this appendix, we introduce more details on the Ni-based alloys phase field system, i.e.,  the definition of the total free energy \eqref{simple free energy-1}.
According to \cite{ansara1997thermodynamic, liu2018morphology, zhu2002linking},
the $\gamma/\gamma'$ two-phase microstructure in Ni-based alloys can
be described by a composition field variable $c$ of the Al element and  
four long-range order parameter fields $\eta_i:=\eta_i(\mathbf x,t)$ with $i=1,2,3$, and $4$. 
As shown in \cref{lattice}, a  face center cubic (FCC) superlattice 
is divided into four sub-lattices. Each sub-lattice includes four atoms
and the site fractions of Al element in sub-lattices A, B, C, and D are denoted as $c\eta_i$.
The overall composition $c$ can be computed by $c=\sum\limits_{i=1}^4 c\eta_i/4$,
which implies constraints $\sum\limits_{i=1}^4 \eta_i=4$ and $\eta_i\in[0,4]$. 
The composition field variable for the Ni element is set as $1-c$.
For Ni-based alloys including  $\gamma$/$\gamma'$ phases, the composition field variable $c$ 
is in the range of $(0,0.25)$. 
In the disordered phase  ($\gamma$ phase),  the four long-range order parameter fields $\eta_i$ are the same, i.e., $\eta_i=1$. 
If the first three  long-range order parameter fields
have the same value and different from the fourth, there is 
the  ordering phase ($\gamma'$ phase).
The total free energy of a $\gamma/\gamma'$ two-phase system, including the local
chemical (Gibbs) free energy, the interfacial
energy, and the elastic energy, can be written as the following function of $c$ and  $\eta_i$ with $i=1,2,3$
\cite{ansara1997thermodynamic, liu2018morphology, zhu2002linking}
\begin{equation}
\label{free energy}
{\cal E}=\int\limits_\Omega\left[\frac1{V_m}E_{\textnormal G}(c,\eta_1,\eta_2,\eta_3)+\frac{ \gamma_c}2| \nabla c|^2+\frac{\gamma_{\eta}}{2}
\sum\limits_{i=1}^3|\nabla \eta_i|^2+E_{\textnormal{el}}(c)\right]\hbox{d} \mathbf x,
\end{equation}
where $\Omega\in\mathbb{R}^d$ is the computational domain, $V_m=1.48\times 10^{-5}{\hbox{m}}^3/\hbox{mol}$ is the mole volume,
$E_{\textnormal G}(c,\eta_1,\eta_2,\eta_3)$ is the local
chemical free energy, and $E_{\textnormal{el}}(c)$ is the elastic energy. Here, $\gamma_c=2.5\times 10^{-9}$J/m  and $\gamma_{\eta}=6.0\times 10^{-12}$J/m 
are the gradient energy coefficients of the composition and long-range order
parameters, respectively.

\begin{figure}
\centering
{\includegraphics[width=0.5\textwidth]{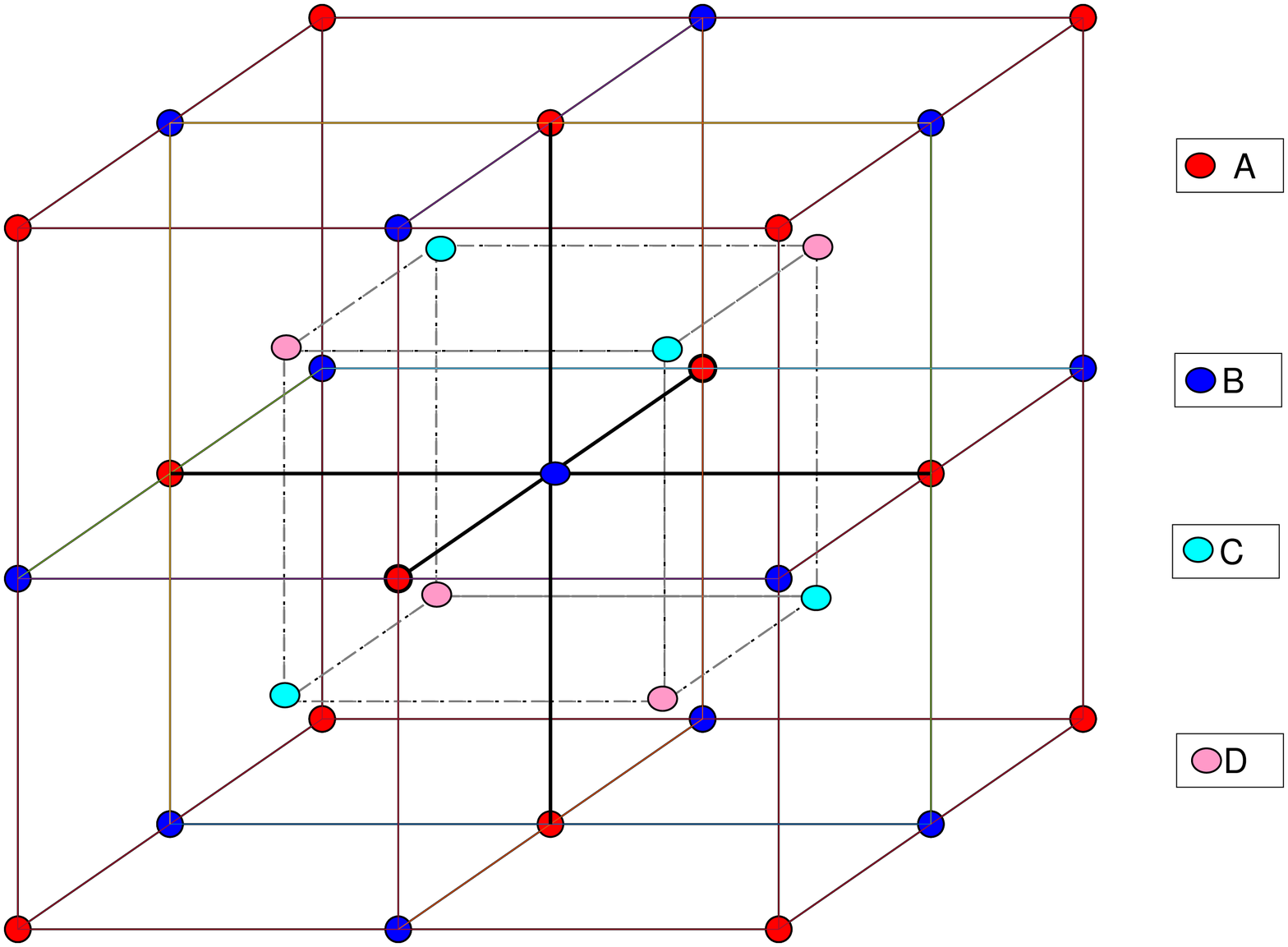}}
\caption{A FCC  lattice is divided into four cubic sub-lattices, labeled A, B, C, and D, each with lattice constant $a$. 
The D site is nonmagnetic and the others are magnetic.}\label{lattice}
\end{figure}

The Gibbs free energy defines the basic thermodynamic
properties of the Ni-based alloys system, which can be written in terms of the composition
and the long-range order parameter fields as \cite{ansara1997thermodynamic, zhu2002linking}
\begin{equation}
E_{\textnormal G}(c,\eta_1,\eta_2,\eta_3)=E^{\textnormal{dis}}(c)+E^{\textnormal{ord}}(c,\eta_1,\eta_2,\eta_3)-E^{\textnormal{ord}}(c,1,1,1),
\end{equation}
where $E^{\textnormal{dis}}$ and $E^{\textnormal{ord}}$ are the Gibbs free energies of the
disordered phase and ordered phase, respectively.
The Gibbs free energy  of a pure disorder phase is defined as
\begin{equation}
E^{\textnormal{dis}}(c)=E^{\textnormal{dis}}_0(c)+\Delta E^{\textnormal{dis}}_{\textnormal{ideal}}(c)+\Delta E^{\textnormal{dis}}_{\textnormal{ex}}(c),
\end{equation}
where the reference Gibbs energy $E^{\textnormal{dis}}_0(c)=E_0^{\textnormal{Al}}c+E_0^{\textnormal{Ni}}(1-c)$, 
the entropy $\Delta E^{\textnormal{dis}}_{\textnormal{ideal}}(c)=RT\Psi(c)$, 
and the excess Gibbs energy  $\Delta E^{\textnormal{dis}}_{\textnormal{ex}}(c)=c(1-c)\sum\limits_{i=0}^3[L_i(2c-1)^i ]$.
The Gibbs free energy $E^{\textnormal{ord}}$ of a pure order phase  is given by
\begin{equation}
E^{\textnormal{ord}}(c,\eta_1,\eta_2,\eta_3)=E^{\textnormal{ord}}_0(c,\eta_1,\eta_2,\eta_3)+\Delta E^{\textnormal{ord}}_{\textnormal{ideal}}(c,\eta_1,\eta_2,\eta_3)+\Delta E^{\textnormal{ord}}_{\textnormal{ex}}(c,\eta_1,\eta_2,\eta_3),
\end{equation}
where the reference Gibbs energy $E^{\textnormal{ord}}_0$, 
the entropy  $\Delta E^{\textnormal{ord}}_{\textnormal{ideal}}$, 
and the excess Gibbs energy  $\Delta E^{\textnormal{ord}}_{\textnormal{ex}}(c)$ are respectively defined as
\begin{equation}
E^{\textnormal{ord}}_0(c,\eta_1,\eta_2,\eta_3)=6U_1c^2\sum_{i=1}^3\phi_i^2,
\end{equation}
\begin{equation}
\Delta E^{\textnormal{ord}}_{\textnormal{ideal}}(c,\eta_1,\eta_2,\eta_3)=\frac{RT}4\sum_{i=1}^4\Psi(c\eta_i),
\end{equation}
and
\begin{equation}
\Delta E^{\textnormal{ord}}_{\textnormal{ex}}(c,\eta_1,\eta_2,\eta_3)=-2U_1c^2\sum_{i=1}^3\phi_i^2+12U_4(1-2c)c^2\sum_{i=1}^3\phi_i^2-48U_4c^3\phi_1\phi_2\phi_3.
\end{equation}
Here $\phi_1=1-\frac{\eta_1+\eta_2}2$, $\phi_2=1-\frac{\eta_1+\eta_3}2$, and $\phi_3=1-\frac{\eta_2+\eta_3}2$.
All the parameters  in the above free energies such as $E_0^{\textnormal{Al}}$, $E_0^{\textnormal{Ni}}$, $L_i$ with $i=0,1,2,3$,
$U_1$, and $U_4$ are functions of temperature $T$ and the  gas
constant $R$, which can be
obtained from CALPHAD database \cite{ansara1997thermodynamic, saunders1998calphad}.

Since the three long-range order parameter fields $\{\eta_i\}_{i=1}^3$ take the same value in  the $\gamma$ and $\gamma'$ phase,
the phase field system can be approximately described by a composition field variable $c$ and one long-range order parameter $\eta$
by taken $\eta(\mathbf x,t)=\eta_1=\eta_2=\eta_3$ \cite{liu2018morphology}.
The total free energy is then rewritten as 
\begin{equation}
\label{simple free energy}
{\cal E}=\int\limits_\Omega\left[\frac 1 {V_m}E_{\textnormal G}(c,\eta)+\frac{ \gamma_c}2| \nabla c|^2+\frac{3\gamma_{\eta}}{2}
|\nabla \eta|^2+E_{\textnormal {el}}(c)\right]\hbox{d} \mathbf x.
\end{equation}
The contour plot of the Gibbs free energy $E_{\textnormal G}(c,\eta)$ at $T=1,037$ K is plotted in \cref{fig:freeenergy}-(a). There are two wells on the surface and  they correspond to the 
equilibrium states of the disorder and order phases. 
The values of the composition and the order parameter for these two phases are
$\eta =1, c=0.137$, and $\eta=0.02, c=0.229$, respectively.
 \cref{fig:freeenergy}-(b)
 displays the relative chemical free energy of the $\gamma$ phase and $\gamma'$ phase of the Ni-based alloys as a function of composition $c$.
  In the stress-free state, the  maximum  driving  force is approximately 292.5 J/mol. 
  \cref{fig:freeenergy}-(c) shows the phase diagram for $\gamma$/$\gamma'$ phase with $T\in[800,\ 1,300]$, which is consistent with the experiment results \cite{massalski1989phase}
  and calculated results by using EAM potential \cite{mishin2004atomistic}.

\begin{figure}[H]
\centering
\subfloat[]{
\includegraphics[width=0.32\textwidth]{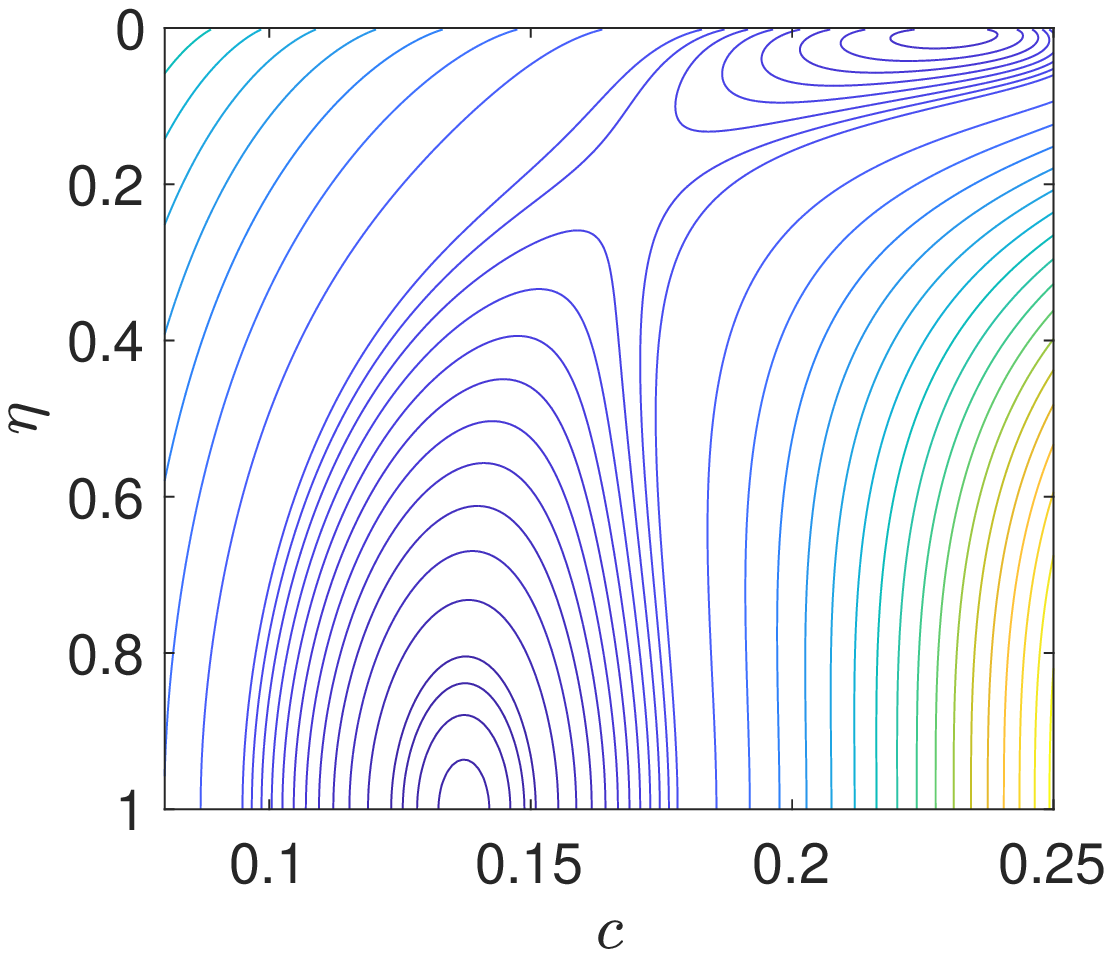}}
\subfloat[]{\includegraphics[width=0.32\textwidth]{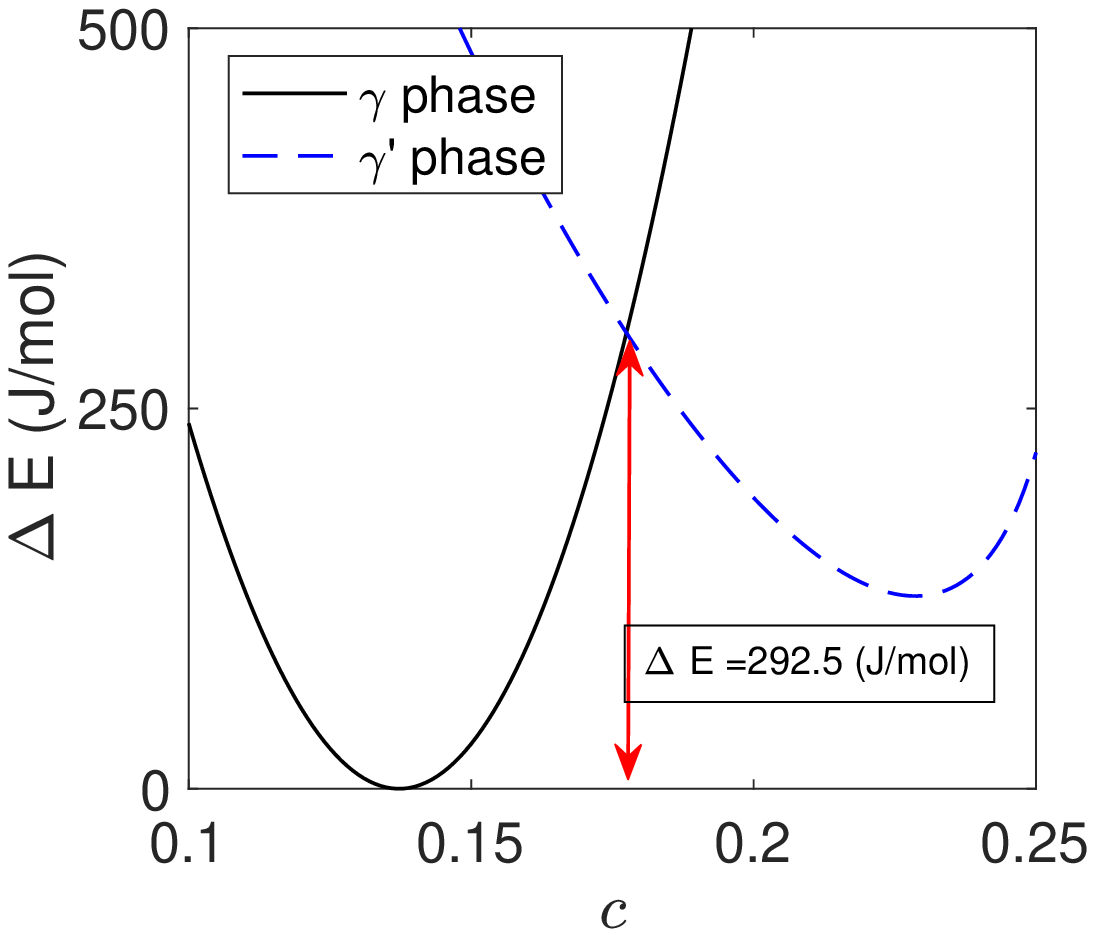}}
\subfloat[]{\includegraphics[width=0.32\textwidth]{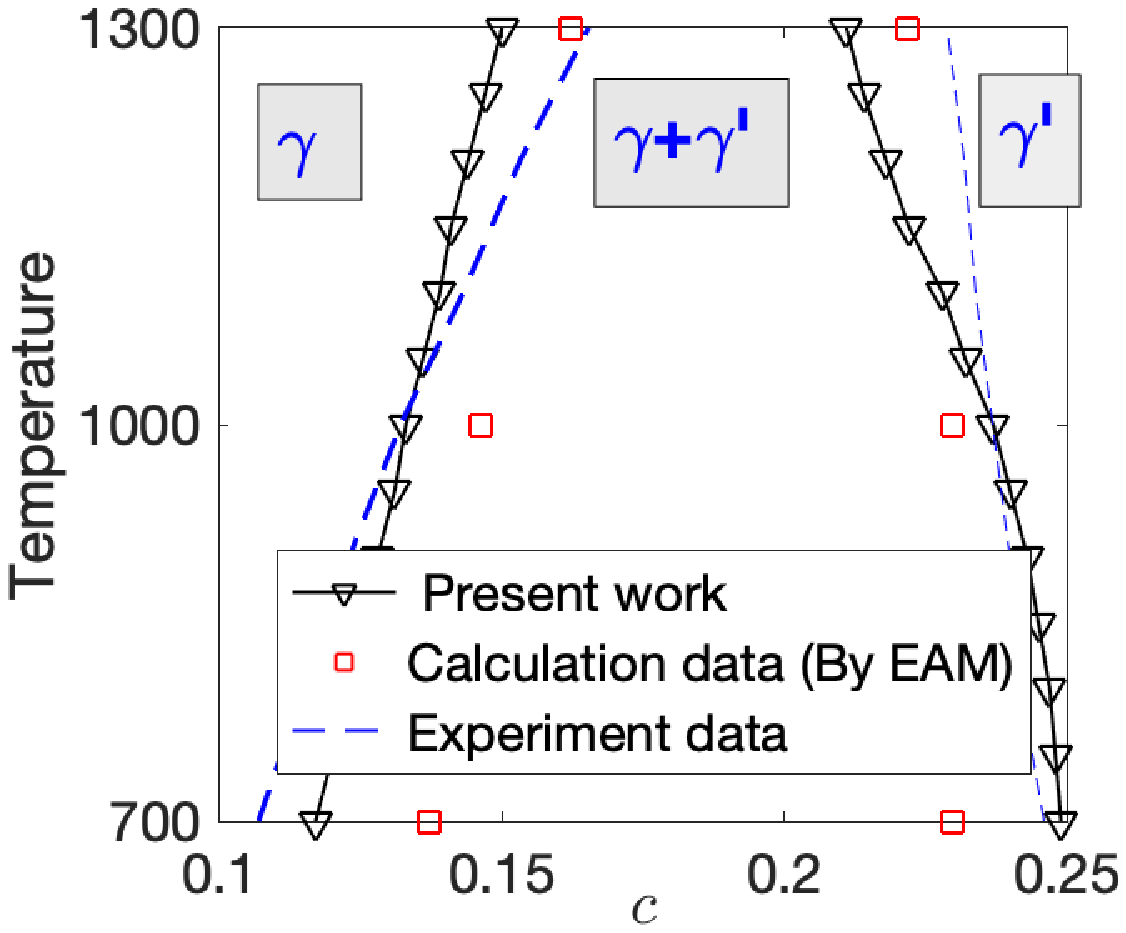}}
\caption{(a) Contour plot of the Gibbs free energy $E(c,\eta)$ at $T=1,073$ K. (b) The Gibbs free energy of the $\gamma$ and $\gamma'$ phases. (c) Phase boundary of the
$\gamma$ and $\gamma'$ phases calculated by $E(c,\eta)$ for temperatures ranging from 800K to 1300K.}
\label{fig:freeenergy}
\end{figure}

The elastic energy has important contributions to the morphological
evolution and coarsening kinetics of $\gamma'$ particles, which can be
obtained by using the micro-elasticity theory \cite{khachaturyan1983theory, turchi2000phase}.
The main factors of the elastic energy fluctuation in the computational domain are: (1) the lattice mismatch between the different phases; (2) the
elastic constant inhomogeneity; (3) externally applied stress.
In this paper, the phase field system is constructed without external stress.
According to \cite{zhu2004three}, 
the elastic constant difference
between the $\gamma$ and $\gamma'$ phases (usually less than 7\%) can be ignored, if no external stress is applied.
Therefore, the elastic strain energy   \cite{hu2001phase,li2012phase} is approximately calculated by:
\begin{equation}
E_{\textnormal{el}}(c)=\frac 12 \epsilon^{\textnormal{el}}: C:\epsilon^{\textnormal{el}},
\end{equation}
where $C$ is the Hooke's tensor, $\epsilon^{\textnormal{el}}=(\epsilon^{\textnormal{el}}_{IJ})$ is the elastic strain.
The elastic strain is reformulated as $\epsilon^{\textnormal{el}}=\epsilon-\epsilon^0$,
where $\epsilon$ is the total strain related to the displacement and the eigenstrain $\epsilon^0$  expressed as
$\epsilon^0=\epsilon_0(c-\bar{c}) {\cal I}$.
Here $\epsilon_0=\frac 1 a \frac{\hbox{d} a}{\hbox{d}c}\approx \frac{a_{\gamma'}-a_\gamma}{a_\gamma(c_{\gamma'}-c_\gamma)}=0.049$ is the composition expansion coefficient of the lattice parameter,
${\cal I}$ is the identity matrix, and  $\bar{c}=\frac{1}{|\Omega|}\int_\Omega c(\bx,t)\md \bx$ is the average composition.
The lattice constants for the $\gamma$ and $\gamma'$ phases are denoted as $a_{\gamma}$ and $a_{\gamma'}$, respectively.
The Ni-based alloys can be taken as a hexagonal crystal, which has three independent elastic constants $C_{11},\,C_{12},$ and $ C_{44}$. 
The elastic strain energy is finally written as 
\begin{equation}
\label{elastic-energy}
\begin{aligned}
E_{\textnormal{el}}(c):
=\frac 12 \left(C_{12}\left(\sum\limits_{I=1}^d\epsilon_{II}^{\textnormal{el}}\right)^2+(C_{11}-C_{12})\sum\limits_{I=1}^d\left(\epsilon_{II}^{\textnormal{el}}\right)^2+
4C_{44}\sum\limits_{I<J}(\epsilon_{IJ}^{\textnormal{el}})^2\right).
\end{aligned}
\end{equation}

The morphological evolution of the  composition field variable $c(\mathbf{x},t)$
and long-range order parameter $\eta(\mathbf{x},t)$
are described by the CH--GL coupled equations:
\begin{equation}\label{CHGL-1}
\left\{
\begin{aligned}
&\frac{\partial c}{\partial t}=\nabla \left[M_c\nabla \frac {\delta {\cal E}}{\delta c}\right] \quad \hbox{in}~\Omega\times(0,\mathcal {T}],\\
& \frac{\partial \eta}{\partial t}=-M_\eta\frac{\delta {\cal E}}{\delta \eta}\quad \hbox{in}~\Omega\times(0,\mathcal {T}],
\end{aligned}
\right.
\end{equation}
where $M_c$ and $M_\eta$ are two mobility parameters. 
By taken dimensionless units for time $t$ as $M_\eta\Delta E$ with $\Delta E=3.3 \times 10^7$ $ \hbox{J/m}^3$, for $\bx$ as $1/{\mathbb L}$
with ${\mathbb L}=1.5$ nm,
we obtain the dimensionless equations \cref{CHGL}, in which $\varkappa=0.008$.


\section*{Appendix B}\label{Appendix-B}
In this appendix, we present the definition of $\tilde{\mathbf{G}}^1_{i}$. 
In the definition of  the variational derivatives \cref{variational derivatives}, $\frac{\partial E_{\textnormal G}(c,\eta)}{\partial c}$, $\frac{\partial E_{\textnormal G}(c,\eta)}{\partial \eta}$, and $\frac{\partial E_{\textnormal{el}}(c)}{\partial c}$
are given as  follows
\begin{equation}
\left\{
\begin{aligned}
&\frac{\partial E_{\textnormal G}(c,\eta)}{\partial c}=M_0+M_1(\phi)c+M_2(\phi)c^2+M_3c^3+M_4c^4+\frac{RT}4\varphi'_c(c),\\
&\frac{\partial E_{\textnormal G}(c,\eta)}{\partial \eta}=M_5(c)\phi+M_6(c)\phi^2
+\frac{RT}4\varphi'_\eta,\\
&\frac{\partial E_{\textnormal{el}}(c)}{\partial c}=-\epsilon_{0}{\cal I}:C:\epsilon^{\textnormal{el}},
\end{aligned}
\right.
\end{equation}
where the dimensionless  parameters (functions) are defined by
\begin{equation}
\left\{
\begin{aligned}
&M_0=\frac{1}{V_m\Delta E}\left[E_0^{\textnormal{Al}}-E_0^{\textnormal{Ni}}+(L_0-L1+L2-L3)\right],
\\
&M_1(\phi)=\frac{1}{V_m\Delta E}\left[-(2L_0 - 6L_1 + 10L_2 - 14L_3)+24U_1\phi^2+72U_4\phi^2\right],
\\
&M_2(\phi)=\frac{1}{V_m\Delta E}\left[-(6L_1 - 24L_2 + 54L_3)-216U_4\phi^2-144U_4\phi^3\right],\\
&M_3=-\frac{16L_2 - 80L_3}{V_m\Delta E},\\
&M_4=-\frac{40L_3}{V_m\Delta E},\\
&M_5(c)=-\frac{24U_1c^2+72U_4(1-2c)c^2}{V_m\Delta E},\\
&M_6(c)=\frac{144U_4c^3}{V_m\Delta E},\\
&\varphi'_c=\frac{1}{V_m\Delta E}\left[3\eta\Phi'(c\eta)+(4-3\eta)\Phi'(c(4-3\eta))\right],\\
&\varphi'_\eta=\frac{1}{V_m\Delta E}\left[3c\Phi'(c\eta)-3c\Phi'(c(4-3\eta))\right].
\end{aligned}
\right.
\end{equation}
The definition of  $\tilde{\mathbf{G}}^1_{i}$ is then given by
\begin{equation}
\tilde{\mathbf{G}}^1_{i}=\left(
\begin{aligned}
&\frac{M_1(\tilde{\phi}_i^n)+M_1(\tilde{\phi}_i^{n+1})}2\tilde c_{i}^{n+1/2}+\frac{M_2(\tilde{\phi}_i^n)+M_2(\tilde{\phi}_i^{n+1})}6\frac{(\tilde c_{i}^{n+1})^3-(\tilde c_{i}^{n})^3}{\tilde c_{i}^{n+1}-\tilde c_{i}^{n}}\\
&\qquad\qquad+\frac {M_3}4 \frac{(\tilde c_{i}^{n+1})^4-(\tilde c_{i}^{n})^4}{\tilde c_{i}^{n+1}-\tilde c_{i}^{n}} +\frac {M_4}5 \frac{(\tilde c_{i}^{n+1})^5-(\tilde c_{i}^{n})^5}{\tilde c_{i}^{n+1}-\tilde c_{i}^{n}} +M_0,\\
&\frac{M_5(\tilde{c}_i^n)+M_5(\tilde{c}_i^{n+1})}2\tilde \phi_{i}^{n+1/2}+\frac{M_6(\tilde{c}_i^n)+M_6(\tilde{c}_i^{n+1})}6\frac{(\tilde \phi_{i}^{n+1})^3-(\tilde \phi_{i}^{n})^3}{\tilde \phi_{i}^{n+1}-\tilde \phi_{i}^{n}}
\end{aligned}
\right).
\end{equation}


\end{document}